\providecommand{\lra}{\longrightarrow}
\providecommand{\hra}{\hookrightarrow}
\providecommand{\CC}{{\mathbb{C}}}
\providecommand{\RR}{{\mathbb{R}}}
\providecommand{\OO}{{\mathcal O}}
\providecommand{\GG}{{\mathcal G}}
\providecommand{\THM}{{\mathbb{T}_H M}}
\providecommand{\pareq}{{\sim_H}}
\providecommand{\Lg}{{\mathfrak g}}
\providecommand{\sm}{{\Gamma}}
\newtheorem{definition}{Definition}
\newtheorem{lemma}[definition]{Lemma}
\newtheorem{proposition}[definition]{Proposition}
\newtheorem{corollary}[definition]{Corollary}
\begin{document}
\title{The Geometry of the Osculating Nilpotent Group Structures of the  Heisenberg Calculus}
\author{Pierre Julg}
\address{Universit\'e d'Orl\'eans, MAPMO,  Route de Chartres, BP 6759, 45067 Orl\'eans Cedex 2, France}
\email{pierre.julg@univ-orleans.fr}
\author{Erik van Erp}
\address{Department of Mathematics, Dartmouth College, 27 N. Main St., Hanover, New Hampshire 03755, USA}
\email{jhamvanerp@gmail.com}\maketitle

{\small
\begin{center}{\bf Abstract}\end{center}
\vskip 6pt

\begin{quote}We explore the geometry that underlies the osculating nilpotent group structures of the Heisenberg calculus.
For a smooth manifold $M$ with a distribution $H\subseteq TM$ analysts use explicit (and rather complicated) coordinate formulas to define the nilpotent groups that are central to the calculus.
Our aim in this paper is to provide insight in the intrinsic geometry that underlies these coordinate formulas.
First, we introduce `parabolic arrows' as a  generalization of tangent vectors. The definition of parabolic arrows involves a mix of first and second order derivatives.
Parabolic arrows can be composed, and the group of parabolic arrows can be identified with the nilpotent groups of the (generalized) Heisenberg calculus.
Secondly, we formulate a notion of exponential map for the fiber bundle of parabolic arrows, and show how it explains the coordinate formulas of osculating structures found in the literature on the Heisenberg calculus.
The result is a conceptual simplification and unification of the treatment of the Heisenberg calculus.
\end{quote}
} 

\section{Introduction: Osculating nilpotent groups in analysis and geometry}

\subsection{Motivation}

The ideas in this paper were motivated by the work of the second author on  index problems for hypoelliptic Fredholm operators in the Heisenberg calculus \citelist{\cite{vE10a} \cite{vE10b}}.
The construction of an appropriate `tangent groupoid' for this index problem required a better geometric understanding of the nilpotent group structures that  appear in the definition of the Heisenberg pseudodifferential calculus.
We believe that the geometric ideas that we developed are of independent interest, and in the present paper we present them without reference to index theoretical concerns.
This introduction provides an exposition of the history of the problem.

\subsection{Nilpotent groups in analysis}

Osculating group structures were first introduced by Gerald Folland and Elias Stein \cite{FS74} as an aid in the analysis of the tangential CR operator $\bar{\partial}_b$ on the boundary of a strongly pseudoconvex complex domain. 
Let $H^{1,0}\subset TM\otimes \CC$ be a CR structure on $M$.
At each point $m\in M$, Folland and Stein consider a special type of coordinates $x\in \RR^{2k+1}$ with $x=0$ at $m$.
The coordinate system needs to be adapted, in a suitable sense, to the CR structure.
The coordinate space $\RR^{2k+1}$ is  identified with the Heisenberg group,
and on $\RR^{2k+1}$
there are  vector fields $X_1,\cdots, X_k, Y_1, \cdots, Y_k, T$ that are  translation invariant for the Heisenberg group structure, with commutator relations $[X_j, Y_j]=T$.
These vector fields on $\RR^{2k+1}$ are identified, via the carefully chosen coordinate system, with vector fields on $M$ defined in a neighborhood of $m$.

The choice of the coordinate system is such that the complex vector fields
\[ Z_j = \frac{1}{2}(X_j+\sqrt{-1}\,Y_j)\]
are `close to' an orthonormal frame for the bundle $H^{1,0}$.
Exactly what it means to be `close to' (or to {\em `osculate'}) is made precise in \cite[Theorem~14.1]{FS74}.
Their precise notion of  `closeness'  allows a reduction of the hypoellipticity problem for the $\bar{\partial}_b$ operator on a CR manifold to the hypoellipticity problem for translation invariant model operators on the Heisenberg group,
which, in turn, is solved by noncommutative harmonic analysis. 

Folland and Stein refer to their special coordinate systems as {\em osculating Heisenberg structures}.
Osculating structures of nilpotent groups  play a key role in the subsequent literature on hypoelliptic operators (for example \citelist{\cite{BG88} \cite{Cu89} \cite{Ta84} \cite{RS76}}).  
Following Folland and Stein, analysts typically choose local coordinates $U\to \RR^n$ on an open set $U\subseteq M$,
and define a nilpotent group structure on the coordinate space $\RR^n$
by means of an explicit formula.

For example, in \cite{BG88} Richard Beals and Peter Greiner introduce osculating structures for {\em Heisenberg manifolds}.
Heisenberg manifolds are  manifolds equipped with a distribution $H\subset TM$ of codimension one.
The distribution $H$ may be a contact structure, a foliation, or it can be an arbitrary distribution with no geometric significance.
Beals and Greiner start with a system of coordinates $\psi_m$ that depends on the point $m\in M$
and is loosely adapted to the distribution $H$.
Each point $m\in M$ has its own coordinate system $\psi_m\,\colon U_m\to \RR^n$ for which $\psi_m(m)=0$,
while the coordinate systems $\psi_m$ themselves have to vary smoothly with $m$ (in a neighborhood of $m$).
The group structure on the coordinate space $\RR^n$ is not fixed, but is defined by means of a rather complicated  string of formulas
(1.8), (1.11), (1.15) in \cite{BG88}.
These formulas contain partial derivatives that measure the {\em changes} in the coordinate system $\Phi_m$ as the point $m$ varies.
In this way, the nilpotent group structure on $\RR^n$ depends, in a rather complicated way, on the germ of the distribution $H$ near $m$. 

As a final example, in \cite{Cu89} Thomas Cummins generalizes the Heisenberg pseudodifferemtial calculus further to manifolds with a {\em filtration} $H_1\subseteq H_2\subseteq H_3=TM$, giving rise to three-step nilpotent osculating groups.
Cummins' formulas for the osculating group structure are very similar to (but more general than) the formulas of Beals and Greiner.

It is hard to discern any intrinsic geometry underlying the coordinate formulas that define the osculating group structures.
The aim of this paper is to clarify the geometric origin of these group structures,
and to derive the formulas found in the analytic literature from geometrically intuitive principles.

\subsection{Nilpotent Lie algebras and the equivalence problem}

In the geometric literature,  group structures that are closely related to the osculating structures of the analysts have been around for some time.
The oldest mention of such structures is in work on the `equivalence problem' introduced by Cartan in 1910 \cite{Ca10}: Find a full set of infinitesimal invariants of a manifold with distribution $H\subseteq M$.

There exists a simple and elegant definition of the {\em Lie algebra} of our nilpotent osculating groups.
The basic equality, 
\[ [fX,gY] = fg[X,Y] + f(X.g)Y - g(Y.f)X ,\]
shows that if $X, Y$ are sections of $H$ then {\em modulo} $H$ the value of the bracket $[X,Y](m)$ at $m\in M$ only depends on the values $X(m)$ and $Y(m)$ at $m$.
In other words, the commutator of vector fields induces a {\em pointwise} bracket,
\[ H_m \otimes H_m \to H_m \;\colon\; X\otimes Y \mapsto [X,Y]\,\text{\rm mod}\,H ,\]
where $m\in M$, and $N = TM/H$ denotes the quotient bundle.
This can be extended to a Lie bracket on $\Lg_m = H_m\oplus N_m$,
by taking $[\Lg_m,N_m] = 0$. 
Clearly, the Lie algebra $\Lg_m$ is two-step nilpotent.
The isomorphism class of these Lie algebras is a first infinitesimal invariant of the distribution $H$.

One can verify that the Lie algebras $\Lg_m$ are isomorphic to the Lie algebras of the osculating groups of the analysts.
However, this simple construction does not fully clarify the osculating structure. 
The analysis of hypoelliptic operators on a manifold $M$ depends on an approximation of differential operators on $M$ by translation invariant operators  on the nilpotent group.  
This requires an identification of an open subset of the nilpotent group with an open subset of the manifold,
and the identification has to  result in a `good' approximation of operators. 
The Lie algebras $\Lg_m$ are canonically identified with the fibers of the vector bundle $H\oplus N$,
which we may identify (by a choice of section $N\hra TM$) with the tangent bundle $TM$.
To identify a fiber of $TM$  with an open set in the manifold (locally) we need an {\em exponential map} $TM\to M$.
However, not every exponential map gives the desired degree of `closeness' of operators on the group and on the manifold.
To achieve a satisfactory  explanation of the formulas of the analysts we must clarify which exponential maps `osculate' the distribution $H$ on the manifold sufficiently closely, and precisely in what sense.

Before we can understand what the correct exponential maps are, we must clear up one final missing link.
The geometric literature provides an intrinsic definition of a bundle of nilpotent {\em Lie algebras},
which is derived from the Lie algebra of vector fields.
The analysis of hypoelliptic operators requires a system of osculating nilpotent {\em groups}.
The distinction turns out to be relevant.

\subsection{Nilpotent groups in sub-Riemannian geometry}

Nilpotent group structures also play a role  in {\em sub-Riemannian geometry}.
A sub-Riemannian manifold is a manifold $M$ together with a distribution $H\subset TM$ and a metric on $H$.
In sub-Riemannian geometry the tangent space $T_mM$ at a so-called `regular' point $m\in M$  carries the structure of a nilpotent group.
In the special case where $[H,H]=TM$ (brackets of vector fields in $H$ span $TM$ at each point in $M$) this group is isomorphic to the osculating group.
The group structure is defined by exponentiating the Lie algebra structure $\Lg_m=H_m\oplus N_m$ introduced in the previous section.
In sub-Riemannian geometry it is really the {\em group} structure on $T_mM$
that is of interest, not the Lie algebra structure.
It is the tangent space as a {\em group} that makes it a useful approximation to the manifold {\em as a sub-Riemannian metric space}.
The sense in which the geometry of the nilpotent group approximates the geometry of the sub-Riemannian manifold is closely related to the notion of `osculation' of operators as it appears in analysis.

In \cite[p.73--76]{Be96} Andr\'e Bella\"iche  considers the question, ``Why is the tangent space a group?''
Bella\"iche asks whether, in the context of sub-Riemannian geometry, there is a more {\em direct} definition of the group structure on $T_mM$ (or on $H_m\oplus N_m$).
Is there a way to define the group structure that does not depend on a prior definition of the Lie algebra?
Can we  explain geometrically how composition of group elements arises?
As we will show in this paper, it is precisely the formulation of a satisfactory answer to this question that  leads to the appropriate notion of `osculating' exponential maps, which, in turn, fully clarifies the intrinsic geometry underlying the osculating structures of the Heisenberg calculus.

It is interesting to note that Bella\"iche considers whether Alain Connes' {\em tangent groupoid} could contain a hint of how to answer his questions.
The tangent groupoid is obtained by taking the trivial groupoid $M\times M$ and `blowing up the diagonal'.
Composition of pairs in $M\times M$ is
\[
(a, b)\cdot (c, d) \begin{cases}
= (a, d) & \text{\rm if}\; b=c,\\
\text{not defined} & \text{\rm if}\; b\ne c.
\end{cases}
\] 
By introducing a parameter $t\in [0, 1]$,
we  let the pair $(a(t), b(t))$ converge to a tangent vector.
Provided that $a(0)=b(0)=m$ we have
\[ v = \lim_{t\to 0} \,\frac{a(t)-b(t)}{t} \in T_mM.\]
This defines a topology on the groupoid that is the union
\[ TM \cup M\times M\times (0,1].\]
As Connes shows, the tangent groupoid can be equipped with a natural smooth  structure. 
(See \cite{Co94}, II.5).

Connes' construction is of interest to Bela\"iche because it provides an intuitive construction of addition of tangent vectors (i.e.,  the group structure on the tangent space $TM$)
as the limit $t\to 0$ of the groupoid of pairs $M\times M$.
Bela\"iche does not believe that a similar construction could explain the nilpotent group structures arising in sub-Riemannian geometry. 
However, as we will see in the final section of this paper, there is a natural modification of Connes' tangent groupoid that is appropriate for the Heisenberg calculus (in section \ref{section:groupoid}),
and that exhibits the  nilpotent group structure of sub-Riemannian geometry as a limit of the pair groupoid $M\times M$. 
Our construction of this groupoid relies on the correct notion of `osculating' exponential maps.

\subsection{Overview of the paper}

In section \ref{section:arrows} we define a new kind of `tangent vector'---or, rather, a generalization of tangent vectors  appropriate for the definition of osculating groups.
We call these new objects `parabolic arrows'. 
Like a tangent vector, a parabolic arrow is an infinitesimal approximation of a smooth curve near a point.
While tangent vectors are defined by means of first order derivatives,
parabolic arrows involve a mix of first and second order derivatives.

As we will see in section \ref{section:arrows}, parabolic arrows can be composed in  a natural way by extending them to local {\em flows}, which compose in the obvious way.
It is similar to the way ordinary tangent vectors could be added (by composition of flows), but the appearance of second order derivatives significantly complicates the picture.
In particular, composition of parabolic arrows is {\em noncommutative}.

Parabolic arrows at a point $m\in M$ are shown to form a nilpotent Lie group, and the Lie algebra $\Lg_m=H_m\oplus N_m$ (defined by taking brackets of vector fields) is shown to be its Lie algebra.
Thus, parabolic arrows are a geometric realization of elements in the osculating {\em group},
defined with reference to the geometry of curves and flows, instead of the usual definition via the Lie algebra of vector fields.

Algebraically, the osculating group structure arises from {\em automorphisms} of the algebra $C^\infty(M)$ (realized as diffeomorphisms of $M$) while the Lie algebra structure is derived from {\em derivations} of $C^\infty(M)$ (vector fields).
In section \ref{jet} we show that the formalism of jets of smooth maps $\RR\to \mathrm{Aut}\,(C^\infty(M))$ (representing flows) allows us to reformulate in a perhaps more conceptual way the definition of parabolic flows and parabolic arrows. 
At the same time, the jet formalism gives an easy generalization to the case of a filtration $H_1\subseteq H_2\subseteq ...\subseteq TM$ of subbundles of the tangent bundle, giving rise to several step nilpotent groups.

Armed with new geometric insight in the nature of the osculating group elements and their composition,
we introduce in section \ref{hypind10:section:exp} the appropriate notion of an exponential map for the fiber bundle of parabolic arrows.
Because parabolic arrows are related to curves on the manifold,
the group of parabolic arrows has a built-in relation to the manifold. 
Because of this connection a definition of `osculating' exponential map for parabolic arrows suggests itself naturally.
To show the effectiveness of our definition, we analyze the osculating structures of Folland and Stein for CR manifolds and of Beals and Greiner for Heisenberg manifolds, and we {\em derive} the explicit coordinate formulas used by these analysts from our  geometric concepts. 

Finally, in section \ref{section:groupoid} we will see that our construction of the osculating groups blends perfectly with the tangent groupoid formalism.
We employ parabolic arrows and osculating exponential maps to construct a tangent groupoid for the Heisenberg calculus.
We constructed  such a groupoid for the special case of contact manifolds in \cite{vE10a},
making use of Darboux's theorem.
In section \ref{section:groupoid} we will show how parabolic arrows and osculating exponential maps make the construction of a tangent groupoid for the Heisenberg calculus a straightforward generalization of Connes' construction.
(Our construction here applies in the case of an {\em arbitrary} distribution $H\subseteq TM$. In the special case of Heisenberg manifolds an alternative construction of this groupoid was given by Raphael Ponge in \cite{Po06}.)

\subsection{Simplified picture of the Heisenberg calculus}
 
To conclude this introduction, we sketch a simplified picture of the Heisenberg pseudodifferential calculus (and its generalizations) by means of the geometric concepts developed here.

Starting with a distribution $H\subseteq TM$
we have a filtration of the Lie algebra $\sm(TM)$ of vector fields on $M$,
where sections in $H$ are given order $1$ and all other sections in $TM$ have order $2$.
The associated {\em graded} Lie algebra can be identified with sections in the bundle of two-step graded Lie algebras $H\oplus N$, whose construction was explained above.
Let $T_HM$ denote the bundle of nilpotent Lie {\em groups} associated to $H\oplus N$,
which we identify with the fiber bundle of parabolic arrows.

Choose an $H$-adapted (`osculating') exponential map
\[ \exp^{\nabla}\;\colon\;T_HM\to M.\]
Extend this map to a local diffeomorphism near the zero section in $T_HM$,
\[ h\;\colon\; T_HM\to M\times M\;\colon\; (m,v)\mapsto (\exp_m^{\nabla}(v), m)\]
By means of the map $h$ we pull back the Schwartz kernel $k(m,m')$ of a continuous linear operator $C^\infty(M)\to C^\infty(M)$  to the bundle of osculating groups $T_HM$.
We obtain a smooth family $k_m(v)=k(h(m,v))$ of distributions on the nilpotent osculating groups $T_HM_m$ parametrized by $m\in M$.
If the operator associated to $k$ is pseudolocal, then each distribution $k_m$ has an isolated singularity at the origin of the group $T_HM_m$.
An operator with Schwartz kernel $k$ is in the calculus if each distribution $k_m$ has an asymptotic expansion (in homogeneous terms) on the nilpotent groups $T_HM_m$.

Precise details this approach to the generalized Heisenberg calculus are developed in a forthcoming publication \cite{vEY15}. 
The crucial simplification comes from the use of $H$-adapted (or `osculating') exponential maps introduced in this paper.

\vskip 6pt
\noindent{\bf Remark.} 
For simplicity of exposition, we focus in this paper on the geometry of osculating groups for manifolds equipped with a single distribution $H\subseteq TM$.
But the Heisenberg calculus has been generalized to arbitrary {\em filtered manifolds}.
A filtered manifold is a manifold with a nested sequence of distributions 
\[ H_1\subseteq H_2\subseteq \cdots \subseteq H_r=TM,\]
where it is required that the sections in these bundles form a filtration on the Lie algebra of vector fields, i.e.,
\[ [\sm(H_i), \sm(H_j)]\subseteq \sm(H_{i+j}).\]
In \cite{Cu89} Cummins develops the details of such a calculus for the case $r=3$, modeled on three-step nilpotent groups.
In an unpublished preprint of 1982  \cite{Me82}, Anders Melin develops a generalized Heisenberg calculus for  arbitrary filtered manifolds, modeled on graded nilpotent groups of arbirary length. 
Section \ref{jet} below constructs the group of generalized parabolic arrows that is suitable for this situation.
It will be shown in \cite{vEY15} how the geometric concepts developed in the current paper simplify the results of Melin.

\section{Parabolic Arrows and Their Composition.}\label{section:arrows}

\subsection{Parabolic arrows.}\label{sect:para}
Throughout this and the following sections, $M$ denotes a smooth manifold with a specified distribution $H\subseteq TM$.
We will write $N = TM/H$ for the quotient bundle,
and denote the fiber dimensions by $p=\text{dim\,}H$, $q=\text{dim\,}N$, and $n=p+q=\text{dim\,}M$.
We will {\em not} assume that $q=1$.

When studying a Heisenberg structure $(M,H)$ it is convenient to work with a special type of coordinates.

\begin{definition}
Let $m$ be a point on $M$, and $U\subseteq M$ an open set in $M$ containing $m$.
A coordinate chart $\phi\colon U \to \RR^n$, $\phi(m') = (x_1,\ldots,x_n)$ is called an {\em $H$-coordinate chart at $m$},
if $\phi(m) = 0$, and the first $p$ coordinate vectors $\partial/\partial x_i$ $(i=1,\ldots,p)$ at the point $m$ span the fiber $H_m$ of $H$ at $m$.
\end{definition}
Tangent vectors can be defined as equivalence classes of smooth curves.
By analogy, we introduce an equivalence relation involving second-order derivatives.

\begin{definition}
Let $c_1,c_2\colon [-1,1]\to M$ be two smooth curves that are tangent to $H$ at $t=0$.
For such curves we say that $c_1 \pareq c_2$ if $c_1(0)=c_2(0)$ 
and if, choosing $H$-coordinates centered at $c_1(0)=c_2(0)$, we have 
\begin{align*}
 c_1'(0) - c_2'(0) & = 0, \\
 c_1''(0) - c_2''(0) & \in H.
\end{align*}
An equivalence class $[c]_H$ is called a {\em parabolic arrow} at the point $c(0)$.
The set of parabolic arrows at $m\in M$ is denoted $T_HM_m$, while
\[ T_H M = \bigcup_{m\in M} T_HM_m .\]
\end{definition}
We can give $T_HM$ the topology induced by the $C^2$-topology on the set of curves, but for the moment we just think of $T_HM$ as a set.

\begin{lemma}\label{l1}
The equivalence relation $\sim_H$ is well-defined,
i.e., independent of the choice of the $H$-coordinates.
\end{lemma}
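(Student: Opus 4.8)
The plan is to examine how the first- and second-order jet data of a curve transform under a change of $H$-coordinates, and to pin down exactly why both defining conditions are preserved. I would fix two $H$-coordinate charts $\phi, \psi$ centered at $m = c_1(0) = c_2(0)$ and let $F = \psi \circ \phi^{-1}$ be the transition map, with $F(0) = 0$. Writing $x(t) = \phi(c(t))$ and $y(t) = \psi(c(t)) = F(x(t))$ for a curve $c$, the chain rule yields at $t = 0$ a relation of the form $y'(0) = A\,x'(0)$ and $y''(0) = A\,x''(0) + B(x'(0), x'(0))$, where $A = DF(0)$ is the Jacobian and $B$ is the symmetric bilinear (Hessian) contribution of the second derivatives of $F$ at $0$.

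The first condition, $c_1'(0) = c_2'(0)$, is the easy one: it simply asserts equality of tangent vectors, which is intrinsic. In coordinates this reads $y_1'(0) - y_2'(0) = A(x_1'(0) - x_2'(0))$, and invertibility of $A$ settles it at once.

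The real content is the second condition, and the main obstacle is that the second derivative $c''(0)$ is \emph{not} tensorial — the Hessian term $B(x'(0), x'(0))$ spoils the naive transformation law. The key observation I would emphasize is that the relation only ever compares curves whose first derivatives already coincide. Once $x_1'(0) = x_2'(0)$, the quadratic terms cancel upon subtraction, leaving $y_1''(0) - y_2''(0) = A(x_1''(0) - x_2''(0))$. In other words, the \emph{difference} of second derivatives transforms like a genuine tangent vector at $m$, via the Jacobian $A$, even though neither second derivative does so individually. This is precisely why the definition is framed as a comparison of two curves with matching first-order data, and it is the heart of the argument.

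To finish, I would observe that $A$ is exactly the transition matrix for tangent vectors at $m$ between the two charts, and that both charts being $H$-coordinate charts forces $A$ to preserve the subspace representing $H_m$. Since in each chart $H_m$ is by definition the span of the first $p$ coordinate vectors, i.e. $\RR^p \times \{0\} \subseteq \RR^n$, and $A$ represents the identity of $T_mM$ in two bases whose first $p$ vectors each span $H_m$, it maps $\RR^p \times \{0\}$ onto itself. Consequently $x_1''(0) - x_2''(0) \in H$ holds in the $\phi$-chart if and only if $A(x_1''(0) - x_2''(0)) = y_1''(0) - y_2''(0) \in H$ holds in the $\psi$-chart, establishing that $\sim_H$ is independent of the choice of $H$-coordinates.
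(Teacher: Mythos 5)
Your proof is correct and follows essentially the same route as the paper's: expand the transition map by the chain rule, observe that the Hessian term cancels when the two curves have matching first derivatives, and use the fact that the Jacobian of a change of $H$-coordinates preserves $H_m$. Your additional remarks (invertibility of $A$ giving the ``if and only if,'' and the explicit reason why $A$ preserves $\RR^p\times\{0\}$) only make explicit what the paper leaves implicit.
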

\begin{proof}
The condition that $c_1'(0) = c_2'(0)$ is clearly invariant.
We will show that, assuming $c_1'(0) = c_2'(0)$,
the condition 
$ c_1''(0) - c_2''(0) \in H$
on the second derivatives is independent of the choice of $H$-coordinates.
 
If $\psi$ is a change of $H$-coordinates, then:
\begin{align*}
 \frac{d^2 (\psi\circ c)}{dt^2} 
 & = \frac{d}{dt} \left(\sum_{j} \frac{\partial \psi}{\partial x_j}(c(t))  \frac{d c^j}{dt} \right) \\
 & = \sum_{j,k} \frac{\partial^2 \psi}{\partial x_j \partial x_k}(c(t)) \frac{d c^j}{dt} \frac{d c^k}{dt} \; + \sum_j \frac{\partial \psi}{\partial x_j}(c(t))  \frac{d^2 c^j}{dt^2}.
\end{align*}
At $t=0$ we assumed $d c_1/dt = d c_2/dt$, so that the first term on the right hand side is equal for $\psi\circ c_1$ and $\psi\circ c_2$ (at $t=0$). 
Therefore:
\[ (\psi\circ c_1)''(0) - (\psi\circ c_2)''(0) = 
	\frac{\partial \psi}{\partial x}(m)  \;\cdot\; \left( c_1''(0) - c_2''(0) \right). 
\]
Since $\psi$ is a change of $H$-coordinates at $m$, $\partial \psi/\partial x$ preserves $H_m$, so that $c_1''(0) - c_2''(0)\in H_m$ implies $(\psi\circ c_1)''(0) - (\psi\circ c_2)''(0) \in H_m$.

\end{proof}

If we fix $H$-coordinates at $m\in M$,
and consider the second-order expansion (in coordinates) of a curve $c$ with $c(0)=m$,
\[ c(t) = c'(0)\;t + \frac{1}{2} c''(0)\; t^2 + \OO(t^3) ,\]
we see that any such curve is equivalent, as a parabolic arrow, to a curve $\tilde{c}$ of the form
\[ \tilde{c}(t) = (th,t^2n) = (th_1,\ldots,th_p,t^2n_1, \ldots, t^2n_q).\]
This observation forms the basis for the following definition.

\begin{definition}\label{hypind10:def:taylorcoord}
Suppose we are given $H$-coordinates at $m\in M$.
Let $h \in \RR^p, n\in\RR^q$, 
and let $c(t)$ be the curve in $M$ defined (in $H$-coordinates) by
\[ c(t) = (th, t^2n) .\]
We call $(h,n) = (h_1,\ldots h_p,n_1,\ldots,n_q) \in \RR^{p+q}$
the {\em Taylor coordinates} for the parabolic arrow $[c]_H\in T_HM_m$,
induced by the given $H$-coordinates at $m$,
\[ F_m \;\colon\; \RR^{p+q}\to T_HM_m\; \colon\; (h,n)\mapsto [c]_H.\]
\end{definition}
This is analogous to the way in which coordinates on the tangent space $T_mM$ are induced by coordinates on $M$, with the important difference that Taylor coordinates on $T_HM_m$ are defined for only one fiber (i.e., one point $m\in M$) at a time. 

Analogous to the directed line segments that represent tangent vectors, a pictorial representation for the class $[c]_H$ would be a directed segment of a parabola.
Hence our name `parabolic arrow.' Parabolic arrows are what smooth curves look like infinitesimally, when we blow up the manifold using the dilations $(h,n)\mapsto (th,t^2n)$, and let $t\to \infty$.

When working with $H$-coordinates $\phi(m)=x\in \RR^n$, we use the notation $x = (x^H,x^N) \in \RR^{p+q}$, where
\[ x^H = (x_1,\ldots,x_p)\in \RR^p ,\; x^N = (x_{p+1},\ldots,x_{p+q})\in \RR^q.\]
\begin{lemma}\label{coord}
If $\psi$ is a change of $H$-coordinates at $m$, 
then the induced change of Taylor coordinates $\psi(h,n) = (h',n')$ 
for a given parabolic vector in $T_HM_m$ is given by the {\em quadratic} formula:
\begin{align*}
  h' & = D\psi(h) ,\\
  n' & = [D\psi(n) + D^2\psi (h,h)]^{N},
\end{align*}
where $[v]^N$ denotes the normal component of the vector $v=(v^H,v^N)\in \RR^{p+q}$.
\end{lemma}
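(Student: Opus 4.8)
The plan is to evaluate directly the effect of the transition map $\psi$ on the representative curve that defines the Taylor coordinates, and then to read off the new coordinates from its second-order Taylor expansion, exactly in the spirit of the computation already carried out in Lemma~\ref{l1}. By Definition~\ref{hypind10:def:taylorcoord}, the parabolic arrow with Taylor coordinates $(h,n)$ in the original $H$-chart is $[c]_H$ with $c(t)=(th,t^2n)$; its image in the new chart is $[\psi\circ c]_H$, so the new Taylor coordinates $(h',n')$ are determined by the requirement $\psi\circ c\pareq\tilde c$, where $\tilde c(t)=(th',t^2n')$. Everything then reduces to comparing the first two derivatives of $\psi\circ c$ and $\tilde c$ at $t=0$.

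First I would record $c'(0)=(h,0)$ and $c''(0)=(0,2n)$, and apply the second-order chain rule (the same one used in the proof of Lemma~\ref{l1}):
\[
(\psi\circ c)'(0) = D\psi\cdot c'(0), \qquad (\psi\circ c)''(0) = D^2\psi\big(c'(0),c'(0)\big) + D\psi\cdot c''(0),
\]
with $D\psi$ and $D^2\psi$ evaluated at $m$ (the origin). Substituting the derivatives of $c$ gives $(\psi\circ c)'(0)=D\psi(h,0)$ and $(\psi\circ c)''(0)=D^2\psi\big((h,0),(h,0)\big)+2\,D\psi(0,n)$, where the factor $2$ is just the linearity of $D\psi$ applied to $c''(0)=(0,2n)$.

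Next I would match these against $\tilde c$ using the relation $\pareq$. Since $\psi$ is a change of $H$-coordinates, $D\psi$ preserves $H_m$, so $D\psi(h,0)\in H_m$; hence $\psi\circ c$ is tangent to $H$ at $t=0$ (as it must be to represent a parabolic arrow), its normal first-order component vanishes automatically, and its $H$-component yields $h'=D\psi(h)$. For the second-order coordinate, the defining relation only constrains the \emph{normal} part of the second derivative: $\psi\circ c\pareq\tilde c$ forces $[(\psi\circ c)''(0)]^N=[\tilde c''(0)]^N=2n'$. Dividing by two and inserting the expression above gives $n'=\big[\tfrac12 D^2\psi\big((h,0),(h,0)\big)+D\psi(0,n)\big]^N$, which is the asserted quadratic formula once $D\psi(n)$ and $D^2\psi(h,h)$ are read, respectively, as $D\psi(0,n)$ and the quadratic Taylor term $\tfrac12 D^2\psi\big((h,0),(h,0)\big)$; the quadratic dependence on $h$ is then manifest.

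The computation is essentially routine, so I would not expect a genuine obstacle so much as two bookkeeping points that must be made explicit. First, the whole procedure is well defined only by Lemma~\ref{l1}, which guarantees that $(h',n')$ depends on the arrow and not on the chosen representative $c$. Second, the relation $\pareq$ deliberately discards the $H$-component of the second derivative, which is exactly why $n'$ involves only $[\,\cdot\,]^N$ and why the $H$-part of $(\psi\circ c)''(0)$ never enters the formula. The one place where care is required is the combined tracking of $H$- versus $N$-components together with the factor $\tfrac12$ coming from the normalization $c(t)=(th,t^2n)$ built into the definition of Taylor coordinates.
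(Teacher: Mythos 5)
Your proof is correct and takes essentially the same approach as the paper: the paper's entire proof is the one-line remark that the lemma is just the formula for $(\psi\circ c)''(0)$ from Lemma~\ref{l1}, which is precisely the chain-rule computation you carry out explicitly on the representative curve $c(t)=(th,t^2n)$. Your careful handling of the factor $\tfrac12$ --- reading the paper's $D^2\psi(h,h)$ as the quadratic Taylor coefficient $\tfrac12 D^2\psi\bigl((h,0),(h,0)\bigr)$ rather than the raw second derivative --- is a legitimate and useful clarification of a normalization the paper's terse statement leaves implicit.
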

\begin{proof}
This is just the formula for $(\psi\circ c)''(0)$ from the proof of Lemma \ref{l1}. 

\end{proof}

\begin{corollary}
The smooth structures on the set of parabolic arrows $T_HM_m$ at a point $m\in M$ defined by different Taylor coordinates are compatible, i.e., $T_HM_m$ has a natural structure of a smooth manifold.
\end{corollary}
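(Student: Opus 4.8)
The plan is to realize the Taylor coordinate maps $F_m\colon\RR^{p+q}\to T_HM_m$ as the charts of an atlas on $T_HM_m$ and to check, by direct appeal to Lemma \ref{coord}, that any two of them are $C^\infty$-compatible. Each $F_m$ is a global bijection onto $T_HM_m$ (surjectivity is the second-order normal-form observation preceding Definition \ref{hypind10:def:taylorcoord}, and injectivity is built into $\sim_H$), so it suffices to show that the transition map between any two of them is a diffeomorphism of $\RR^{p+q}$. Given two $H$-coordinate charts $\phi_1,\phi_2$ at $m$, I would set $\psi=\phi_2\circ\phi_1^{-1}$, a change of $H$-coordinates fixing $0$; then the transition map $F_{m,2}^{-1}\circ F_{m,1}\colon\RR^{p+q}\to\RR^{p+q}$ between the induced Taylor charts is precisely the map $(h,n)\mapsto(h',n')$ of Lemma \ref{coord}. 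Since every pair of Taylor coordinate systems arises in this way, the whole problem collapses to showing that this single quadratic map is a diffeomorphism.

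First I would note that the transition map is manifestly smooth, being polynomial: the $H$-component $h'=D\psi(h)$ is linear and the $N$-component $n'=[D\psi(n)+D^2\psi(h,h)]^N$ is linear in $n$ and quadratic in $h$. The only genuine content is therefore that the inverse is again smooth, and here I would invoke the defining property of $H$-coordinate changes, namely that $D\psi$ preserves $H_m$. In the coordinate splitting $\RR^{p+q}=\RR^p\oplus\RR^q$ this forces $D\psi$ to be block upper-triangular,
\[ D\psi=\begin{pmatrix} A & B \\ 0 & C\end{pmatrix}, \]
with the lower-left block vanishing exactly because $D\psi$ sends $H_m$ into $H_m$; this is also what makes $h'=D\psi(h)$ land in $\RR^p$. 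As $\psi$ is a diffeomorphism, $D\psi$ is invertible, whence $\det A\,\det C\neq0$ and both $A$ and $C$ are invertible.

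With this in hand the transition map reads $h'=Ah$ and $n'=Cn+[D^2\psi(h,h)]^N$, and I can write its inverse explicitly: $h=A^{-1}h'$ and then $n=C^{-1}\bigl(n'-[D^2\psi(A^{-1}h',A^{-1}h')]^N\bigr)$, which is again polynomial in $(h',n')$, hence smooth. Thus the transition map is a polynomial diffeomorphism, the Taylor coordinate charts are mutually $C^\infty$-compatible, and the maximal atlas they generate endows $T_HM_m$ with a canonical smooth manifold structure. I expect the only step needing care to be this invertibility argument: it is precisely the $H$-preservation property of $H$-coordinate changes that imposes the triangular shape of $D\psi$ and thereby upgrades the evident smoothness of the transition map to the existence of a smooth (indeed polynomial) inverse.
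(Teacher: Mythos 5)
Your proof is correct and follows essentially the same route as the paper, which states the corollary as an immediate consequence of Lemma \ref{coord}: the transition between any two Taylor charts is the quadratic (polynomial) map of that lemma, hence smooth. Your explicit block-triangular inversion fills in the one detail the paper leaves implicit; alternatively, one can note that the inverse transition is itself the Taylor-coordinate change associated to $\psi^{-1}$, which is again quadratic by the same lemma.
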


It is clear from Lemma \ref{coord} that Taylor coordinates define a structure on $T_HM_m$ that is  more than just a smooth structure. This will be fully clarified when we introduce the {\em group} structure on $T_HM_m$, 
but part of this extra structure is captured if we consider how parabolic arrows behave when rescaled.
\begin{definition}
The family of dilations $\delta_s$, $s>0$, 
on the space of parabolic arrows $T_HM_m$
is defined by
\[ \delta_s([c]_H) = [c_s]_H ,\]
where $[c]_H$ is a parabolic arrow in $T_HM_m$ represented by the curve $c(t)$, and $c_s$ denotes the reparametrized curve $c_s(t) = c(st)$.
\end{definition}
When working in Taylor coordinates $[c]_H =(h,n)$, we simply have
\[ \delta_s(h,n) = (sh,s^2n) .\]
Clearly, these dilations are smooth maps and $\delta_{st}=\delta_s\circ \delta_t$.

Considering Taylor coordinates on $T_HM_m$, it is tempting to identify parabolic arrows with vectors in $H\oplus N$. Lemma \ref{coord} shows that such an identification is not invariant if we use Taylor coordinates to define it. 
But we have at least the following result.

\begin{lemma}\label{hypind11:lemma:liealg}
There is a {\em natural} identification
\[ T_0(T_HM_m) \cong H_m\oplus N_m \]
of the tangent space  $T_0(T_HM_m)$ at the `origin' (i.e., at the equivalence class $[0]_H$ of the constant curve at $m$) with the vector space $H_m\oplus N_m$.
It is obtained by identifying the coordinates on $T_0(T_HM_m)$ induced 
by Taylor coordinates on $T_HM_m$, with the natural coordinates on $H_m\oplus N_m$.
\end{lemma}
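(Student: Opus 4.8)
\emph{Proof proposal.} The plan is to show that the identification described in the statement, which a priori depends on a choice of $H$-coordinates, is in fact independent of that choice. A system of $H$-coordinates at $m$ determines Taylor coordinates $F_m\colon\RR^{p+q}\to T_HM_m$ (Definition \ref{hypind10:def:taylorcoord}) with $F_m(0)=[0]_H$, so its differential at $0$ supplies a basis $\partial/\partial h_1,\ldots,\partial/\partial h_p,\partial/\partial n_1,\ldots,\partial/\partial n_q$ of $T_0(T_HM_m)$ and hence an isomorphism $T_0(T_HM_m)\cong\RR^{p+q}$. The \emph{same} $H$-coordinates give the frame $\partial/\partial x_1|_m,\ldots,\partial/\partial x_n|_m$ of $T_mM$ whose first $p$ vectors span $H_m$ and whose last $q$ vectors project to a basis of $N_m=T_mM/H_m$, hence an isomorphism $H_m\oplus N_m\cong\RR^{p+q}$. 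Composing these two gives the identification $T_0(T_HM_m)\cong H_m\oplus N_m$ for each choice; the content of the lemma is that the composite does not depend on the choice.

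First I would compare two $H$-coordinate systems related by a change of coordinates $\psi$. The induced change of Taylor coordinates is given by Lemma \ref{coord}, namely $h'=D\psi(h)$ and $n'=[D\psi(n)+D^2\psi(h,h)]^N$. To read off the effect on tangent vectors at the origin I would differentiate this map at $(h,n)=(0,0)$ and compute its Jacobian. Since $h'$ is linear in $h$ and independent of $n$, while $n'$ splits into a part linear in $n$ and a part \emph{quadratic} in $h$, the Jacobian at the origin is block diagonal: the $H$-block is the restriction $D\psi|_{H_m}$, and the $N$-block is the map $\overline{D\psi}$ that $D\psi$ induces on the quotient $N_m=T_mM/H_m$ (well defined because a change of $H$-coordinates preserves $H_m$).

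The key point --- and the reason the identification is natural only at the origin --- is that the quadratic term $D^2\psi(h,h)$ makes no contribution to this Jacobian, its derivative in $h$ vanishing at $h=0$. Consequently the transformation law on $T_0(T_HM_m)$ collapses to exactly the transformation law of the natural coordinates on $H_m\oplus N_m$ under $\psi$: the map $D\psi$ on the $H$-factor and the induced quotient map on the $N$-factor. The two identifications therefore agree, which is the asserted naturality.

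I expect the only real obstacle to be bookkeeping rather than substance: one must check that applying $D\psi$ and then taking the normal component $[\,\cdot\,]^N$ genuinely computes the canonical induced map $\overline{D\psi}$ on $N_m$, and that the $H$-block is read off correctly using that $D\psi$ stabilizes $H_m$. Both are immediate from Lemma \ref{coord}; the conceptual crux is simply the vanishing at the origin of the derivative of the quadratic term, which is precisely what distinguishes the origin $[0]_H$ from a general point of $T_HM_m$.
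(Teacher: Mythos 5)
Your proposal is correct and follows essentially the same route as the paper: both invoke Lemma \ref{coord}, differentiate the change-of-Taylor-coordinates formula at the origin, observe that the quadratic term $D^2\psi(h,h)$ contributes nothing there, and conclude that the resulting transformation law $h'=D\psi(h)$, $n'=D\psi(n)^N$ is exactly that of the natural coordinates on $H_m\oplus N_m$. Your write-up merely spells out the bookkeeping (the block-diagonal Jacobian and the identification of the $N$-block with the induced quotient map) that the paper leaves implicit.
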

\begin{proof} 
From Lemma \ref{coord}, we see that Taylor coordinates on $T_0(T_HM_m)$ transform according to the formula
\[  h'  = D\psi(h) ,\, n' = D\psi(n)^{N},\]
because the quadratic term $D^2\psi(h,h)$ has derivative $0$ at $0$.
This is precisely how the induced coordinates on $H_m\oplus N_m$ behave under coordinate transformation $\psi$.

\end{proof}

\subsection{Composition of parabolic arrows.}\label{sect:pgrp}

We will now show that the manifold $T_HM_m$  has the structure of a Lie group.
Our method is based on composition of local flows of $M$.
By a flow $\Phi$ of $M$ we mean a diffeomorphism
$\Phi\colon M\times \RR\to M$, such that $m\mapsto \Phi_t(m)=\Phi(m,t)$ is a diffeomorphism for each $t\in \RR$, while $\Phi_0(m)=m$.
A {\em local flow} is only defined on an open subset $V\subseteq M\times \RR$.
Two flows $\Phi, \Psi$ can be composed:
\[ (\Phi\circ \Psi)(m,t) = \Phi(\Psi(m,t),\,t) .\]
Using the notation $\Phi_t$ for the local diffeomorphism $\Phi_t(m) = \Phi(m,t)$,
we can write $(\Phi\circ\Psi)_t = \Phi_t\circ \Psi_t$.

A (local) flow is said to be generated by the vector field $X\in \sm(TM)$, if
\[ \frac{\partial\Phi}{\partial t} \,(m,t) = X(m) .\]
However, we are specifically interested in  flows for which the generating vector field $X_t(m) =\frac{\partial\Phi}{\partial t}(m,t)$ is {\em not constant}, but depends on $t$.
We will only require that $X_0$ is a section in $H$, but we will allow $X_t$ to pick up a component in the $N$-direction.
This is because we are not primarily interested in the {\em tangent vectors} to the flow lines $c_m(t) = \Phi(m,t)$, but in the {\em parabolic arrows} that they define.

We start with a formula that gives a quadratic approximation (in $t$) for the composition of two arbitrary flows.

\begin{lemma}\label{lemm:l2}
Let $\Phi^X, \Phi^Y$ be two flows in $\RR^n$ that are defined near the origin,
and let $X$ and $Y$ be their generating vector fields at $t=0$:
\[ X(x)  = (\partial_t\Phi^X)(x,0) ,\;\text{and}\;Y(x)  = (\partial_t\Phi^Y)(x,0) .\]
Then the composition of $\Phi^X$ and $\Phi^Y$ has the following second-order approximation,
\[  (\Phi^X_t \circ \Phi^Y_t)(0) = \Phi^X_t(0) + \Phi^Y_t(0) + t^2\, (\nabla_{Y} X)(0) +\OO(t^3) ,\]
where $\nabla$ denotes the standard connection on $T\RR^n$.
\end{lemma}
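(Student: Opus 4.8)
The plan is to reduce everything to a bookkeeping of second-order Taylor expansions in $t$, carried out in the ambient coordinates on $\RR^n$. First I would record the expansions of the two flows through order $t^2$. Writing $A(x) = (\partial_t^2\Phi^X)(x,0)$ and $B(x) = (\partial_t^2\Phi^Y)(x,0)$, we have
\[ \Phi^X_t(x) = x + tX(x) + \tfrac{1}{2}t^2 A(x) + \OO(t^3), \qquad \Phi^Y_t(x) = x + tY(x) + \tfrac{1}{2}t^2 B(x) + \OO(t^3). \]
Since the generating vector fields are allowed to depend on $t$, the functions $A$ and $B$ carry no further structure; the whole point of the computation is that they will drop out.

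Next I would evaluate the inner flow at the origin, obtaining $y := \Phi^Y_t(0) = tY(0) + \tfrac{1}{2}t^2 B(0) + \OO(t^3)$, and substitute this into the expansion of $\Phi^X_t$. The key observation is a matching of orders: since $X(y)$ appears with a factor $t$, I only need $X(y)$ to first order in $t$, namely $X(y) = X(0) + DX(0)\,y + \OO(|y|^2) = X(0) + t\,DX(0)Y(0) + \OO(t^2)$; and since $A(y)$ appears with a factor $t^2$, I only need its value $A(0)$ at the origin. Collecting terms then gives
\[ (\Phi^X_t\circ\Phi^Y_t)(0) = tX(0) + tY(0) + \tfrac{1}{2}t^2\big(A(0)+B(0)\big) + t^2\,DX(0)Y(0) + \OO(t^3). \]

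Finally I would compare this with the sum $\Phi^X_t(0) + \Phi^Y_t(0) = tX(0)+tY(0) + \tfrac{1}{2}t^2\big(A(0)+B(0)\big) + \OO(t^3)$. The diagonal second-order terms $\tfrac{1}{2}t^2(A(0)+B(0))$ cancel exactly, leaving $(\Phi^X_t\circ\Phi^Y_t)(0) = \Phi^X_t(0) + \Phi^Y_t(0) + t^2\,DX(0)Y(0) + \OO(t^3)$. To conclude I would only need to identify the surviving cross term with the connection: for the standard flat connection $\nabla$ on $T\RR^n$ one has $\nabla_Y X = DX\cdot Y$ componentwise, so that $DX(0)Y(0) = (\nabla_Y X)(0)$, which is the claimed formula.

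As for difficulty, there is no genuine obstacle here, only the need for careful order-tracking. The one conceptual point worth emphasizing---and the reason the statement is so clean---is that the second $t$-derivatives $A$ and $B$ of the individual flows enter only through the cancelling diagonal terms, so that the entire $t^2$-correction to the composition is governed by the first-order data $X$, $Y$ together with the single derivative $DX(0)$. This asymmetry (the appearance of $DX$ but not $DY$) is exactly what one expects from composing the outer flow $\Phi^X$ with the displacement produced by the inner flow $\Phi^Y$.
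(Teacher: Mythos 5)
Your proof is correct and is essentially the paper's argument: the paper organizes the identical bookkeeping via a two-variable Taylor expansion of $F(r,s)=\Phi^X_r(\Phi^Y_s(0))$, in which the pure $r$- and $s$-derivatives (your diagonal terms $A(0)$, $B(0)$) are automatically absorbed into $F(t,0)+F(0,t)$, leaving only the mixed term $\partial_s\partial_r F(0,0)$. Your explicit cancellation of those diagonal terms and your identification of the surviving cross term $DX(0)Y(0)=(\nabla_Y X)(0)$ are the same two steps carried out in one-variable form.
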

\noindent{\bf Remark.} Observe that $X = \partial_t\Phi^X$ is required only at $t=0$!
\vskip 6pt
\begin{proof}
Write $F(r,s) = \Phi^X_r (\Phi^Y_s (0))$.
The Taylor series for $F$ gives
\begin{align*}
 F(t,t)  & = t \;\partial_r F(0,0) + t^2 \partial_s \partial_r F(0,0) + t \;\partial_s F(0,0) + \frac{1}{2} t^2 \partial_r^2 F(0,0) \\
         & \;\; + \frac{1}{2} t^2 \partial_s^2 F(0,0)    +\OO(t^3) \\
         & = F(t,0) + F(0,t) + t^2 \partial_s \partial_r F(0,0) +\OO(t^3),
\end{align*}        
or
\[ \Phi^X_t\Phi^Y_t(0) = \Phi^X_t(0) + \Phi^Y_t(0) + t^2 \left.\partial_s \partial_r \Phi^X_r \Phi^Y_s(0)\right|_{r=s=0} +\OO(t^3) .\] 
At r=0 we have $\partial_r \Phi^X_r = X$, so:
\[  \left.\partial_s \partial_r \Phi^X_r (\Phi^Y_s(0)) \right|_{r=0} = \partial_s\left( X(\Phi^Y_s(0))\right) .\] 
Here $X(\Phi^Y_s(0))$ denotes the vector field $X$ evaluated at the point $\Phi^Y_s(0)$, which can be thought of as a point on the curve $s \mapsto \Phi^Y_s(0)$.
The operator $\partial_s$ is applied to the components of this vector,
and the chain rule gives
\begin{align*}
 \left.\partial_s X(\Phi^Y_s(0)) \right|_{s=0}
                      & = \sum_{i=1}^p  \partial_i X(0)\cdot \left.\partial_s \Phi^Y_s(0)^i\right|_{s=0} \\
                      & = \sum_{i=1}^p  (\partial_i X)(0)\;Y^i(0)
                      = (\nabla_{Y} X)(0) 
\end{align*}
\end{proof}

We are interested in flows $\Phi$ for which the {\em flow lines} $\Phi^m(t) = \Phi(m,t)$ define parabolic arrows. Hence the following definition.

\begin{definition}
A {\em parabolic flow} of $(M,H)$
is a local flow $\Phi \colon V\to M$ (with $V$ an open subset in $M \times \RR$)
whose generating vector field at $t=0$,
\[ \frac{\partial\Phi}{\partial t} \,(m,0) \]
(defined at each point $m$ for which $(m,0)\in V$)
is a section of $H$.
\end{definition}
Given a {\em parabolic} flow $\Phi$, each of the flow lines $\Phi^m$ is tangent to $H$ at $t=0$, and so determines a parabolic arrow $[\Phi^m]_H$ at each $m\in M$ (with $(m,0)\in V$).
Once we have defined the smooth structure on $T_HM$ it will become clear that $m\mapsto \Phi^m$ is a smooth section of the bundle $T_HM$. It is an analogue of the notion of a generating vector field, but it is only defined at $t=0$.

We now show how composition of parabolic flows induces a group structure on the fibers of $T_HM$.

\begin{proposition}\label{prop:thm1}
Let $\Phi, \Psi$ be two {\em parabolic} flows. Then the composition
$(\Phi\circ \Psi)_t=\Phi_t\circ \Psi_t$
(defined on an appropriate domain) is also a parabolic flow, 
and the parabolic vector $[(\Phi\circ \Psi)^m]_H$ at a point $m\in M$ only depends on the parabolic vectors $[\Phi^m]_H$ and  $[\Psi^m]_H$ at the same point.
\end{proposition}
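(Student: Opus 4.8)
The plan is to work entirely in $H$-coordinates centered at the fixed point $m$ and to verify the two assertions---that $\Phi\circ\Psi$ is parabolic, and that $[(\Phi\circ\Psi)^m]_H$ depends only on $[\Phi^m]_H$ and $[\Psi^m]_H$---by comparing second-order Taylor expansions at $t=0$. The main tool is Lemma \ref{lemm:l2}, which gives the quadratic approximation
\[ (\Phi^X_t\circ\Psi^Y_t)(m) = \Phi^X_t(m) + \Psi^Y_t(m) + t^2\,(\nabla_Y X)(m) + \OO(t^3), \]
where $X = (\partial_t\Phi)(\cdot,0)$ and $Y = (\partial_t\Psi)(\cdot,0)$ are the generating vector fields at $t=0$. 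Since both flows are parabolic, $X(m)$ and $Y(m)$ lie in $H_m$, so the linear-in-$t$ terms contribute only $H$-components; this immediately shows that the composite flow line is tangent to $H$ at $t=0$, proving that $\Phi\circ\Psi$ is again parabolic and that the first derivative of the composite flow line is $X(m)+Y(m)$.

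The heart of the argument is the second claim. First I would extract from the expansion above the parabolic arrow of the composite: its first-order data is $c'(0)=X(m)+Y(m)\in H_m$ and its second-order data is $c''(0) = \Phi''(m) + \Psi''(m) + 2(\nabla_Y X)(m)$, where the factor of two comes from matching the $t^2$ coefficient against the expansion $c(t)=c'(0)t+\tfrac12 c''(0)t^2+\OO(t^3)$. By the definition of $\sim_H$, the parabolic arrow $[(\Phi\circ\Psi)^m]_H$ is determined by $c'(0)$ and by $c''(0)$ modulo $H_m$. So I must show that each of these quantities, \emph{taken modulo $H_m$ where appropriate}, is unchanged if I replace $\Phi$ and $\Psi$ by parabolic flows $\tilde\Phi,\tilde\Psi$ with the same parabolic arrows at $m$, i.e. $[\Phi^m]_H=[\tilde\Phi^m]_H$ and $[\Psi^m]_H=[\tilde\Psi^m]_H$. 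The first-order part is immediate since $[\Phi^m]_H=[\tilde\Phi^m]_H$ forces $X(m)=\tilde X(m)$ and likewise for $Y$. For the second-order part, the terms $\Phi''(m)$ and $\Psi''(m)$ are each determined modulo $H_m$ by the individual arrows, so only the cross term $(\nabla_Y X)(m)$ requires real work.

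The key step---and the place I expect the main obstacle---is showing that $(\nabla_Y X)(m) \bmod H_m$ depends only on the values $X(m),Y(m)\in H_m$ and not on the higher-order behavior of the flows. The obstruction is that $\nabla_Y X$ involves the first derivatives of $X$ in the direction $Y(m)$, which at first glance is \emph{not} determined by $X(m)$ alone. The resolution is to observe that two parabolically-equivalent flows have generating fields agreeing to the relevant order along $H$, and to exploit the identity recalled in the introduction,
\[ [fX,gY] = fg[X,Y] + f(X.g)Y - g(Y.f)X, \]
which shows that modulo $H$ the bracket $[X,Y](m)$ depends only on $X(m),Y(m)\in H_m$. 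Concretely, I would symmetrize: writing $(\nabla_Y X)(m) = \tfrac12\big([X,Y](m) + (\nabla_Y X + \nabla_X Y)(m)\big)$, the symmetric part $\nabla_Y X + \nabla_X Y$ is a symmetric bilinear expression in the \emph{coordinate} second derivatives, which by Lemma \ref{coord} is exactly the quadratic correction $D^2\psi(h,h)$-type term that is absorbed into the $N$-component of the Taylor coordinates and hence is determined by the arrows; the antisymmetric part is $\tfrac12[X,Y](m)$, which modulo $H_m$ depends only on $X(m),Y(m)$ by the bracket identity. Combining these, $c''(0)\bmod H_m$ is a function of $[\Phi^m]_H$ and $[\Psi^m]_H$ alone, completing the proof. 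The subtle point to get right is bookkeeping the split of $\nabla_Y X$ into its symmetric and antisymmetric parts and tracking which pieces survive in $N_m = T_mM/H_m$ versus which are killed by the quotient.
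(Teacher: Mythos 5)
Your overall strategy matches the paper's: fix $H$-coordinates at $m$, apply Lemma \ref{lemm:l2}, note that the first-order term $X(m)+Y(m)$ and the individual second-derivative terms are controlled by the two arrows, and reduce everything to showing that the cross term $(\nabla_Y X)(m)$, taken modulo $H_m$, depends only on $X(m)$ and $Y(m)$. You correctly identify this as the crux, but your resolution of it has a genuine gap. You split $\nabla_Y X$ into an antisymmetric part proportional to $[X,Y]$ and a symmetric part $\tfrac12(\nabla_YX+\nabla_XY)$. The antisymmetric half is fine: the bracket identity does show $[X,Y]^N(m)$ is pointwise. But for the symmetric half you assert it is ``a symmetric bilinear expression in the coordinate second derivatives'' which ``by Lemma \ref{coord} \dots is determined by the arrows.'' That is a non sequitur: Lemma \ref{coord} describes how Taylor coordinates transform under a change of $H$-coordinates $\psi$; it says nothing about whether $[\nabla_YX+\nabla_XY]^N(m)$ is independent of the choice of \emph{extensions} of $X(m),Y(m)$ to sections of $H$ --- and that is exactly what must be proved, since the arrows $[\Phi^m]_H,[\Psi^m]_H$ place no constraint at all on the spatial derivatives $\partial_iX(m),\partial_iY(m)$ entering the symmetric part. (There is also a sign slip: for the flat connection $[X,Y]=\nabla_XY-\nabla_YX$, so $\tfrac12\bigl([X,Y]+\nabla_YX+\nabla_XY\bigr)=\nabla_XY$, not $\nabla_YX$.)

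The missing step has a one-line proof, which is what the paper uses, and it makes your decomposition unnecessary: because $X$ and $Y$ are sections of $H$, the map $(X,Y)\mapsto[\nabla_YX]^N$ is $C^\infty(M)$-bilinear --- indeed $\nabla_{fY}(gX)=fg\nabla_YX+f(Y.g)X$, and the error term $f(Y.g)X$ is proportional to $X\in\Gamma(H)$, hence vanishes in $N=TM/H$. Consequently the \emph{whole} of $[\nabla_YX]^N(m)$, not just its antisymmetric part, depends only on $X(m)$ and $Y(m)$; concretely, if $X(m)=0$, write $X=\sum f_iZ_i$ with $f_i(m)=0$ and $Z_i$ a local frame of $H$, and observe $\nabla_YX(m)=\sum(Y.f_i)(m)Z_i(m)\in H_m$. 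This is how the paper defines the pointwise bilinear map $\nabla^N\colon H_m\otimes H_m\to N_m$ and finishes in two lines. If you insist on treating the symmetric part separately, it can be salvaged by polarization together with the computation that a curve everywhere tangent to $H$ satisfies $c''(0)^N=b(c'(0),c'(0))$ (the computation in the proof of Proposition \ref{prop:hug}), but as written your justification of the symmetric part does not prove anything, and it is the step on which the whole proposition turns.
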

\begin{proof} 
Let $\nabla$ denote the standard local connection on $TM$ induced by the $H$-coordinates at $m$.
Because $\nabla_{fY}(gX) = fg\nabla_Y(X) + f(Y.g)X$, we see that the operation
\[ \Gamma^\infty(H) \otimes \Gamma^\infty(H) \to \Gamma^\infty(N) \;\colon\; 
                       (X,Y) \mapsto \left[ \nabla_YX \right]^N  \]
is $C^\infty(M)$-bilinear.
In other words, the $N$-component of $\nabla_YX$
at the point $m\in M$ only depends on the values  $X(m)$ and $Y(m)$ at $m$.
We denote this $N$-component by $\nabla^N$:
\begin{align*}
 & \nabla^N\;\colon\; H_m \otimes H_m \to N_m ,\\
 & \nabla^N(X(m),Y(m))= \left[ \nabla_YX \right]^N(m) .
\end{align*}
Lemma \ref{lemm:l2} implies:
\begin{align*}
 \Phi_t \Psi_t (0)^H & = \Phi_t(0)^H + \Psi_t(0)^H + \OO(t^2) ,\\ 
 \Phi_t \Psi_t (0)^N & = \Phi_t(0)^N + \Psi_t(0)^N + t^2 \nabla^N(X(0),Y(0)) +\OO(t^3).
\end{align*} 
Writing
\begin{align*}
 & \Phi_t(0)^H  = t h + \OO(t^2) ,\;  \Phi_t(0)^N  = t^2 n + \OO(t^3), \\
 & \Psi_t(0)^H  = t h' + \OO(t^2) ,\;  \Psi_t(0)^N  = t^2 n' + \OO(t^3),
\end{align*}
this becomes
\begin{align*}
 \Phi_t \Psi_t (0)^H & = t (h+h')   + \OO(t^2) ,\\ 
 \Phi_t \Psi_t (0)^N & = t^2 \left( n+n'+\nabla^N(h,h')\right) +\OO(t^3).
\end{align*} 
The proposition is a direct corollary of these formulas.

\end{proof}

It is clear from Proposition \ref{prop:thm1} that composition of parabolic flows induces a group structure on the set $T_HM_m$, for each $m\in M$, analogous to addition of tangent vectors in $T_mM$.
To see that $T_HM_m$ is actually a {\em Lie group}, we use the explicit formulas obtained in the proof of Proposition  \ref{prop:thm1} .

\begin{proposition}\label{prop:thm2}
Let $\Phi, \Psi$ be two parabolic flows.
Given $H$-coordinates at $m$, let $X_i\in \Gamma(H)$ $(i=1,\ldots,p)$ be local sections in $H$
that extend the coordinate tangent  vectors $\partial_i$ at $m$.
Let $X_i^l$ $(l=1,\ldots,n)$ denote the coefficients of the vector field $X_i$, i.e., 
\[ X_i = \sum X_i^l\partial_l .\]
Let $(b_{ij}^k)$ be the array of constants
\[ b_{ij}^k = \partial_j X_i^{p+k}(m) \]
for $i,j=1,\ldots,p$ and $k=1\ldots,q$.
It represents a bilinear map $b\colon\RR^p\times\RR^p\to\RR^q$ via
\[ b(v,w)^k =  \sum_{i,j=1}^p b_{ij}^k v^i w^j ,\]
with $k=1,\ldots,q$.

If $(h,n)$ and $(h',n')$ are the Taylor coordinates of $[\Phi^m]_q$ and  $[\Psi^m]_q$, respectively, then the Taylor coordinates $(h'',n'')$ of $[(\Phi\circ \Psi)_m]_q$ are given by
\begin{align*}
 h'' & = h+h' , \\ 
 n'' & = n+n'+b(h,h').
\end{align*} 

\end{proposition}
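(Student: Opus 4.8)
The plan is to observe that Proposition~\ref{prop:thm1} has already carried out the essential work. Its proof shows that, in Taylor coordinates, the composition law reads $h''=h+h'$ and $n''=n+n'+\nabla^N(h,h')$, where $\nabla^N\colon H_m\otimes H_m\to N_m$ is the $C^\infty(M)$-bilinear map with $\nabla^N(X(m),Y(m))=[\nabla_YX]^N(m)$. The first formula $h''=h+h'$ is therefore immediate, and the entire remaining content of Proposition~\ref{prop:thm2} is the identification $\nabla^N(h,h')=b(h,h')$ of this intrinsic bilinear map with the coordinate array $b_{ij}^k$. Everything reduces to a direct computation in the given $H$-coordinates.

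First I would use the $C^\infty(M)$-bilinearity of $\nabla^N$ established in Proposition~\ref{prop:thm1}: since $[\nabla_YX]^N(m)$ depends only on $X(m),Y(m)\in H_m$, I am free to pick convenient extensions. Writing $h,h'$ in the basis $\partial_1,\ldots,\partial_p$ of $H_m$, I set $X=\sum_i h^iX_i$ and $Y=\sum_j h'^jX_j$, using the chosen sections $X_i\in\Gamma(H)$ that extend $\partial_i$ at $m$. Then $X(m)=h$ and $Y(m)=h'$, so $\nabla^N(h,h')=[\nabla_YX]^N(m)$.

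Next I would expand $\nabla_YX$ in coordinates. For the standard flat connection $(\nabla_YX)^l=\sum_k Y^k\,\partial_kX^l$; differentiating $X^l=\sum_i h^iX_i^l$ (the $h^i$ are constants) and substituting gives $(\nabla_YX)^l=\sum_{i,j,k}h^ih'^j\,X_j^k\,\partial_kX_i^l$, where $i,j$ run over $1,\ldots,p$. Evaluating at $m$ is where the choice of extension pays off: since $X_j$ extends $\partial_j$ we have $X_j^k(m)=\delta_j^k$, so the sum over $k$ collapses to $k=j$ and leaves
\[ (\nabla_YX)^l(m)=\sum_{i,j=1}^p h^ih'^j\,\partial_jX_i^l(m). \]
Taking the $N$-component $l=p+k$ and recognizing $\partial_jX_i^{p+k}(m)=b_{ij}^k$ by definition yields $\nabla^N(h,h')^k=\sum_{i,j}b_{ij}^k h^ih'^j=b(h,h')^k$, which is exactly the claimed formula for $n''$.

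The computation presents no serious obstacle; the one point needing care is the bookkeeping at $m$, where one must keep clearly separate the spurious terms that vanish because $X_j^k(m)=\delta_j^k$ from the genuinely surviving derivative $\partial_jX_i^{p+k}(m)$. Conceptually, flatness of $\nabla$ annihilates the Christoffel contribution, so that $\nabla^N$ measures precisely the first-order failure of the frame $X_i$ to stay tangent to $H$---which is exactly what the constants $b_{ij}^k$ encode. Since the left-hand side $\nabla^N$ is intrinsic by Proposition~\ref{prop:thm1}, while the array $b$ is defined through the chosen extensions, the computation also re-confirms that $b(h,h')$ is in fact independent of the particular choice of the $X_i$.
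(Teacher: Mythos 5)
Your proof is correct and takes essentially the same route as the paper: both reduce to the composition formulas established in Proposition~\ref{prop:thm1} and then identify $\nabla^N$ with $b$ by computing the flat-connection derivative at $m$, where $X_j^k(m)=\delta_j^k$ collapses the sum to leave $\partial_j X_i^{p+k}(m)=b_{ij}^k$. The only cosmetic difference is that the paper performs this identification on the basis vectors $\partial_i,\partial_j$ and invokes bilinearity, whereas you expand general $h,h'$ in that basis from the start.
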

\begin{proof}
This is a direct corollary of the formulas in the proof of Proposition \ref{prop:thm1}.
Simply observe that
\[ b(\partial_i,\partial_j)^k = \partial_j X_i^{p+k}(m) = [\nabla_{X_j} X_i]^{p+k}(m) = \nabla(\partial_i,\partial_j)^{p+k} ,\]
which implies that $b(v,w) = \nabla^N(v,w)$.

\end{proof}

\begin{corollary}\label{coro:gdef}
The operation
\[ [\Phi^m]_H \ast [\Psi^m]_H  = [(\Phi\circ \Psi)^m]_H \]
defines the structure of a Lie group on $T_HM_m$.

\end{corollary}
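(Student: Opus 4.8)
The plan is to exploit the closed-form composition law supplied by Proposition \ref{prop:thm2} and thereby reduce everything to the verification of the group axioms for one explicit operation. Fixing $H$-coordinates at $m$ together with the induced Taylor coordinates $F_m\colon\RR^{p+q}\to T_HM_m$, that proposition shows that under the map $\ast$ the composition takes the form
\[ (h,n)\ast(h',n') = \left(h+h',\; n+n'+b(h,h')\right), \]
where $b\colon\RR^p\times\RR^p\to\RR^q$ is the bilinear map recorded there. Well-definedness of $\ast$ as an operation on parabolic arrows---that it depends only on $[\Phi^m]_H$ and $[\Psi^m]_H$ and not on the representing flows---has already been secured by Proposition \ref{prop:thm1}, so the task reduces to checking associativity, identity, and inverses for the displayed formula, and then confirming smoothness.

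Next I would dispatch the three algebraic axioms directly. The identity is the origin $(0,0)$: since $b$ is bilinear, $b(h,0)=b(0,h')=0$, so $(h,n)\ast(0,0)=(0,0)\ast(h,n)=(h,n)$. The two-sided inverse of $(h,n)$ is $\left(-h,\,-n+b(h,h)\right)$; using $b(h,-h)=b(-h,h)=-b(h,h)$ one checks that composing on either side returns $(0,0)$. Associativity is the one point requiring a short computation: on expanding both $\bigl((h_1,n_1)\ast(h_2,n_2)\bigr)\ast(h_3,n_3)$ and $(h_1,n_1)\ast\bigl((h_2,n_2)\ast(h_3,n_3)\bigr)$, the $H$-components agree trivially, while the $N$-components both collapse to $n_1+n_2+n_3+b(h_1,h_2)+b(h_1,h_3)+b(h_2,h_3)$, precisely because $b$ is bilinear in each slot.

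It then remains to upgrade the abstract group to a \emph{Lie} group. By the Corollary following Lemma \ref{coord}, the space $T_HM_m$ carries a natural smooth structure, and since smoothness of multiplication and inversion may be tested in any single Taylor chart, I would simply observe that in these coordinates both maps are polynomial: multiplication is affine-plus-bilinear, and inversion is the quadratic map $(h,n)\mapsto(-h,-n+b(h,h))$. Both are therefore smooth, which identifies $T_HM_m$ as a Lie group; it is in fact two-step nilpotent, with $b$ playing the role of the central cocycle and $\{0\}\times\RR^q$ the centre.

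The main obstacle, honestly, has already been overcome in the two preceding propositions, where the geometric composition of parabolic flows was shown to descend to arrows and computed in closed form; the present corollary is essentially bookkeeping. The only point that genuinely deserves care is to keep the smooth structure and the algebraic operation logically separate---the former supplied by the earlier corollary, the latter by Proposition \ref{prop:thm2}---and to note that, as both are coordinate-independent, the resulting Lie group structure on $T_HM_m$ does not depend on the auxiliary choice of $H$-coordinates.
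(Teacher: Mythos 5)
Your proposal is correct and takes essentially the same route as the paper: the corollary there is likewise obtained by combining Proposition \ref{prop:thm1} (well-definedness of $\ast$ on arrows) with the explicit Taylor-coordinate formula of Proposition \ref{prop:thm2}, the group-axiom and smoothness checks for $(h,n)\ast(h',n')=(h+h',\,n+n'+b(h,h'))$ being relegated to the appendix on the groups $G_B$, where they are declared trivial consequences of bilinearity. Your inline verification of identity, inverses, associativity, and polynomial smoothness in a Taylor chart is precisely that deferred content, so there is no gap.
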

The groups $T_HM_m$ are the {\em osculating groups}
associated to the distribution $(M,H)$.
Note that, although the value of the array $(b_{ij}^k)$ in Proposition \ref{prop:thm2} depends on the choice of coordinates, our definition of the group elements (parabolic vectors) as well as their composition is clearly coordinate-independent.
Furthermore, it is important to notice that the values of $b_{ij}^k$ do {\em not} depend on the choice of vector fields $X_i$,
but only on the choice of coordinates.

Beals and Greiner {\em defined} the osculating groups by means of the formulas we have derived in Proposition \ref{prop:thm2} (see \cite{BG88}, chapter 1).  Note that in their treatment $q=1$, so that the $k$-index in the array $b^k_{ij}$ is missing. The osculating group itself was simply identified with the coordinate space $\RR^n$, and its status as an independent geometric object was left obscure.

\begin{proposition}
The natural dilations $\delta_s$ of the osculating groups $T_HM_m$, induced by reparametrization of curves, are Lie group automorphisms.
\end{proposition}
\begin{proof}
That the dilations are group automorphisms follows immediately from the geometric definition of the group operation on $T_HM_m$ in Corollary \ref{coro:gdef} (by reparametrizing the flows).
Alternatively, using Taylor coordinates we have
$ \delta_s(h,n) = (sh,s^2n) $,
which is clearly a smooth automorphism for the group operation
\[ (h,n,)\ast(h',n') = (h+h',n+n'+b(h,h')) .\] 

\end{proof}

The construction of the osculating bundle $T_HM$ is {\em functorial} for (local) diffeomorphisms. Given a diffeomorphism of manifolds with distribution,
\[ \phi \;\colon\; (M,H)\to (M',H'),\]
such that $D\phi\colon H \to H'$,
we could define the {\em parabolic derivative} $T_H\phi$ of $\phi$ as the map
\[ T_H\phi\;\colon\; T_HM\to T_HM'\;\colon\; [c]_H\mapsto[\phi\circ c]_H ,\]
where $c(t)$ is a curve in $M$ representing a parabolic arrow $[c]_H\in T_HM_m$.
A straightforward calculation, similar to the proof of Lemma \ref{l1}, shows that this is a well-defined map (independent of the choice of the curve $c$),
and functoriality is obvious, i.e.,
\[ T_H(\phi\circ\phi') = T_H\phi \circ T_H\phi'.\]
Clearly, if $\phi$ is a diffeomorphism, then $T_H\phi$ is a group isomorphism in each fiber.

\subsection{The Lie algebras of the osculating groups.}

According to Lemma \ref{hypind11:lemma:liealg}, we may identify the Lie algebra ${\rm Lie\,}(T_HM_m)$ as a vector space with $H_m\oplus N_m$.
In the introduction we defined a Lie algebra structure on $H_m\oplus N_m$, and we now show that it is compatible with the group structure on $T_HM_m$.
We make use of some general results on two-step nilpotent groups that are discussed in the appendix. 

\begin{proposition}\label{prop:lie}
Let $X$ and $Y$ be two (local) sections of $H$.  
Then the value of the normal component $[X,Y]^N(m)$ of the bracket $[X,Y]$ at the point $m$ only depends on the values of $X$ and $Y$ at the point $m$. 

The Lie algebra structure on 
${\rm Lie\,}T_HM_m \cong H_m\oplus N_m$ is given by
\[ [(h,n),(h',n')] = \left(0,[X,Y]^N(m)\right) ,\]
where $X,Y\in\sm(H)$ are arbitrary vector fields with $X(m) = h, Y(m) = h'$.
\end{proposition}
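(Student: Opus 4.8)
The statement really contains two assertions, and the plan is to settle the first one directly and then reduce the second to the explicit group law already in hand.

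For the tensoriality claim, I would invoke the Leibniz identity recorded in the introduction,
\[ [fX,gY] = fg[X,Y] + f(X.g)Y - g(Y.f)X \]
for $f,g\in C^\infty(M)$ and $X,Y\in\sm(H)$. Because $X$ and $Y$ are themselves sections of $H$, the last two terms are pointwise scalar multiples of $Y$ and $X$ and hence lie in $H$; they therefore vanish upon passing to the quotient $N=TM/H$. This gives $[fX,gY]^N = (fg)[X,Y]^N$ modulo $H$, so the assignment $(X,Y)\mapsto[X,Y]^N$ is $C^\infty(M)$-bilinear, i.e.\ tensorial. Tensoriality is precisely the assertion that $[X,Y]^N(m)$ depends only on the values $X(m)$ and $Y(m)$, which proves the first paragraph of the proposition and yields a well-defined antisymmetric bracket $H_m\otimes H_m\to N_m$.

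For the second claim I would start from the explicit multiplication obtained in Proposition \ref{prop:thm2}: in Taylor coordinates the group law on $T_HM_m$ is $(h,n)\ast(h',n')=(h+h',\,n+n'+b(h,h'))$, where $b_{ij}^k=\partial_j X_i^{p+k}(m)$. One checks at once that the $N$-directions $(0,n)$ are central, so this is a two-step nilpotent group and the general results on such groups recalled in the appendix apply. The plan is to use those results to identify the Lie bracket on ${\rm Lie}\,(T_HM_m)$, under the identification ${\rm Lie}\,(T_HM_m)\cong T_0(T_HM_m)\cong H_m\oplus N_m$ of Lemma \ref{hypind11:lemma:liealg}, with the antisymmetrization of $b$. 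The mechanism is the group commutator: computing $g\,g'\,g^{-1}(g')^{-1}$ in the coordinates above, the symmetric part of $b$ cancels and, since the correction is central, the result collapses exactly to $(0,\,b(h,h')-b(h',h))$, which the appendix identifies with the Lie bracket $[(h,n),(h',n')]$.

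It then remains to recognize the antisymmetrized form $b(h,h')-b(h',h)$ as $[X,Y]^N(m)$, and this is the step that carries the real content. Here I would return to the local sections $X_i\in\sm(H)$ with $X_i(m)=\partial_i$ used in Proposition \ref{prop:thm2}, so that $X_i^l(m)=\delta_{il}$. The coordinate formula for the Lie bracket then gives, at the point $m$,
\[ [X_i,X_j]^{p+k}(m) = \partial_i X_j^{p+k}(m) - \partial_j X_i^{p+k}(m) = b_{ji}^k - b_{ij}^k, \]
so the antisymmetrization of $b$ coincides, up to the overall sign fixed by the bracket convention of the appendix, with $[X_i,X_j]^N(m)$. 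Writing $h=\sum h^i\partial_i$, $h'=\sum h'^j\partial_j$ and extending to $X=\sum h^iX_i$, $Y=\sum h'^jX_j$, bilinearity yields $b(h,h')-b(h',h)=[X,Y]^N(m)$, while the tensoriality from the first part guarantees that the right-hand side is independent of the chosen extensions and of the $H$-coordinates. The main obstacle, and the only genuinely delicate point, is exactly this matching: one must reconcile the form $b$, which is manufactured from the coordinate-flat connection through $\nabla^N(X(m),Y(m))=[\nabla_YX]^N(m)$, with the intrinsic, connection-free bracket $[X,Y]^N$. That the connection drops out of the antisymmetrization is the content of the coordinate computation above (equivalently, of the torsion-freeness of the coordinate connection), and the tensoriality of the first part then removes any residual dependence on the extensions.
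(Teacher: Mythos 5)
Your proposal is correct and takes essentially the same route as the paper's proof: both apply Lemma \ref{gl2} from the appendix to the Taylor-coordinate group law of Proposition \ref{prop:thm2}, and both identify the antisymmetrization of $b$ with $[X,Y]^N(m)$ --- your component computation with the frame $X_i$ is exactly the torsion-freeness of the flat coordinate connection that the paper invokes through $b(h,h')=(\nabla_YX)^N(m)$. The only (harmless) variations are that you prove the tensoriality claim directly from the Leibniz identity (the argument the paper gives in its introduction) rather than citing the $C^\infty$-bilinearity of $\nabla^N$ already established in Proposition \ref{prop:thm1}, and that your sign caveat (\emph{up to the overall sign fixed by the bracket convention of the appendix}) mirrors a looseness present in the paper's own proof, which writes $b(h',h)-b(h,h')$ where Lemma \ref{gl2} as literally stated gives $b(h,h')-b(h',h)$.
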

\begin{proof}
This is a straightforward application of Lemma \ref{gl2} to the group structure on $T_HM_m$
as described in Proposition \ref{prop:thm1}. We have
\[ b(h',h)-b(h,h') = \left(\nabla_XY - \nabla_YX\right)^N(m) = [X,Y]^N(m) .\] 
We have already shown that $b(h,h') = \left(\nabla_YX\right)^N(m)$ only depends on $h=X(m)$ and $h'=Y(m)$.

\end{proof}

\noindent We are now in a position to define the smooth structure on the total space 
\[ T_HM = \bigcup T_HM_m .\]
There is a natural bijection
\[ \exp \;\colon\; H_m\oplus N_m \to T_HM_m ,\]
namely the exponential map from the Lie algebra $H_m\oplus N_m$ to the Lie group $T_HM_m$.
We give the total space $T_HM$ the smooth structure that it derives from its identification with $H\oplus N$.

\begin{lemma}
The smooth structure on $T_HM$, obtained by the fiberwise identification with $H\oplus N$ via  exponential maps, is compatible with the Taylor coordinates on each $T_HM_m$, for any choice of $H$-coordinates at $m$.
\end{lemma}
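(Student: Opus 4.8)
The lemma asserts that two smooth structures on the total space $T_HM$ agree. One is the global structure obtained by the fiberwise bijection $\exp\colon H\oplus N\to T_HM$, where $H\oplus N$ is already a smooth vector bundle. The other is the family of local structures given on each fiber by Taylor coordinates $F_m\colon \RR^{p+q}\to T_HM_m$, for a choice of $H$-coordinates at $m$. The plan is to show that, for any $H$-coordinate chart, the composite $\exp^{-1}\circ F_m$ expressing Taylor coordinates in terms of the $H\oplus N$ identification is smooth (with smooth inverse), and depends smoothly on $m$ as well.

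**Strategy.** My approach is to write both identifications in explicit coordinates and compare them through the group law. Fix $H$-coordinates near $m$. By Corollary~\ref{coro:gdef} and Proposition~\ref{prop:thm2}, the group $T_HM_m$ in Taylor coordinates $(h,n)$ has product $(h,n)\ast(h',n')=(h+h',n+n'+b(h,h'))$ for the bilinear form $b$ determined by the coordinates. The key observation is that this is exactly the standard polynomial group law of a $2$-step nilpotent Lie group written in a \emph{Malcev-type} coordinate system. For such a group, the exponential map from its Lie algebra is a polynomial diffeomorphism given by the Baker--Campbell--Hausdorff formula, which truncates after the first bracket term: in these coordinates $\exp(h,n)=(h, n+\tfrac12 b(h,h))$, and by Proposition~\ref{prop:lie} the Lie algebra bracket is $[(h,n),(h',n')]=(0, b(h',h)-b(h,h'))$, matching the $H\oplus N$ bracket from the introduction under Lemma~\ref{hypind11:lemma:liealg}. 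Thus $\exp^{-1}\circ F_m$ is the polynomial map $(h,n)\mapsto(h, n-\tfrac12 b(h,h))$, whose inverse $(h,n)\mapsto(h,n+\tfrac12 b(h,h))$ is also polynomial.

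**Carrying it out.** First I would record that, by Lemma~\ref{hypind11:lemma:liealg}, the linear identification of $T_0(T_HM_m)$ with $H_m\oplus N_m$ matches the linear part of the Taylor coordinates, so the two coordinate systems on each fiber differ only by higher-order terms. Second, I would invoke BCH for the $2$-step nilpotent group to compute $\exp$ in Taylor coordinates explicitly, obtaining the quadratic formula above; since $b$ is bilinear and fixed on the fiber, this is a polynomial automorphism of $\RR^{p+q}$ covering the identity on the $H$-factor. Third, and crucially, I must verify smoothness in $m$: the coefficients $b_{ij}^k(m)=\partial_j X_i^{p+k}(m)$ vary smoothly with $m$ since the $X_i$ are smooth vector fields and $H$-coordinates can be chosen smoothly in $m$; the identification $H\oplus N\cong TM$ used to define the global smooth structure is a smooth bundle map. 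Hence the transition $(m;h,n)\mapsto(m;h,n+\tfrac12 b(m)(h,h))$ between the two structures on the total space is smooth with smooth inverse, establishing compatibility.

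**Main obstacle.** The routine part is the BCH computation on each fiber; the genuine content lies in handling the $m$-dependence. The subtlety I expect to contend with is that Taylor coordinates, unlike coordinates on an ordinary tangent bundle, are defined \emph{one fiber at a time} (as the text emphasizes after Definition~\ref{hypind10:def:taylorcoord}), so there is no single chart-induced family of Taylor coordinates over an open set to compare against. I would therefore phrase compatibility as: for each $m$ and each choice of $H$-coordinates, the fiber map $\exp^{-1}\circ F_m$ is a diffeomorphism, and the resulting smooth structure on $T_HM_m$ is \emph{independent of the choice} by the Corollary following Lemma~\ref{coord}. The smoothness across fibers then follows because the exponential-based structure is manifestly a smooth bundle structure, and the fiberwise diffeomorphisms assemble to a smooth bundle map via the smoothly varying $b_{ij}^k(m)$. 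The care needed is precisely to argue that "smooth on each fiber plus smooth dependence of the defining data" yields a genuine diffeomorphism of total spaces, rather than merely a fiberwise one.
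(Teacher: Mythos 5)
Your proposal is correct and takes essentially the same route as the paper: the group law of Proposition~\ref{prop:thm2} combined with the two-step BCH computation (packaged in the paper as Proposition~\ref{prop:gp1}) shows that in Taylor coordinates the fiberwise exponential is $\exp(h,n)=(h,\,n+\tfrac{1}{2}b(h,h))$, a polynomial diffeomorphism, which is exactly the paper's argument. Your extra verification of smooth dependence on $m$ goes beyond what the lemma itself asserts (compatibility is a one-fiber-at-a-time statement, since Taylor coordinates are only defined fiberwise), but it is harmless and anticipates what is actually needed later for the groupoid charts.
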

\begin{proof} 
Choosing $H$-coordinates at $m$, we get linear coordinates on $H_m\oplus N_m$.
Taking these coordinates and Taylor coordinates on $T_HM_m$,
we have identified ${\rm Lie\,}T_HM_m \cong H_m\oplus N_m$.
According to Proposition \ref{prop:gp1}, the exponential map $H_m\oplus N_m \to T_HM_m$ is expressed in these coordinates as
\[ \exp(h,n) = (h,n+\frac{1}{2}b(h,h)) ,\]
which is clearly a diffeomorphism.

\end{proof}


The natural decomposition ${\rm Lie\,}(T_HM_m) = H_m\oplus N_m$
defines a Lie algebra grading, with $\Lg_1 = H_m$ of degree 1 and  $\Lg_2 = N_m$ of degree $2$.
Corresponding to the grading we have dilations $\delta_t(h,n) = (th,t^2n)$,
and these dilations are Lie algebra automorphisms.
The dilations of the osculating group $T_HM_m$ induced by reparametrization of curves and the dilations of the graded Lie algebra $H_m\oplus N_m$ are related via the exponential map (see Proposition \ref{prop:gp1}):
\begin{align*}
 \exp(\delta_t(h,n)) &= \exp(th, t^2n) = (th,t^2n+\frac{1}{2}b(th,th)) \\
 & = \delta_t(h,n+\frac{1}{2}b(h,h)) = \delta_t\exp(h,n) 
 \end{align*}
It will be useful to characterize the parabolic arrows whose logarithms are vectors in $H$.

\begin{proposition}\label{prop:hug}
If $c\colon \RR \to M$ is a curve such that $c'(t)\in H$ for all $t\in (-\varepsilon,\varepsilon)$, then the parabolic arrow $[c]_H\in T_HM_m$ is the exponential of the tangent vector $c'(0)\in H_m$, considered as an element in the Lie algebra of $T_HM_m$. Here $m=c(0)$.
\end{proposition}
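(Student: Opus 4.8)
The plan is to reduce the statement to a single identity in Taylor coordinates and then extract that identity from horizontality of the curve. First I would fix $H$-coordinates at $m$ and compute the Taylor coordinates $(h,n)$ of the parabolic arrow $[c]_H$ directly from Definition \ref{hypind10:def:taylorcoord}. Writing the coordinate expansion $c(t) = c'(0)\,t + \tfrac12 c''(0)\,t^2 + \OO(t^3)$ and comparing it with the normal form $(th,t^2n)$ through the equivalence relation $\sim_H$, one reads off $h = c'(0)$ (which lies in $\RR^p$ since $c'(0)\in H_m$) and $n = \tfrac12\,[c''(0)]^N$, the normal component of the coordinate acceleration. On the other hand, by the exponential formula $\exp(h,n)=(h,n+\tfrac12 b(h,h))$ recorded just above the statement, the element $\exp(c'(0),0)$ has Taylor coordinates $(h,\tfrac12 b(h,h))$. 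Thus the whole proposition comes down to the single identity $[c''(0)]^N = b(h,h)$.

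To prove this identity I would use horizontality of the \emph{entire} curve, not merely at $t=0$. Let $X_1,\ldots,X_p$ be the local sections of $H$ extending the coordinate fields $\partial_i$ at $m$, as in Proposition \ref{prop:thm2}; they frame $H$ in a neighborhood of $m$. Since $c'(t)\in H_{c(t)}$ for all small $t$, I can write $c'(t) = \sum_i a^i(t)\,X_i(c(t))$ with smooth coefficients $a^i$ satisfying $a^i(0)=h^i$. Differentiating once more and evaluating at $t=0$ gives
\[ c''(0) = \sum_i \dot a^i(0)\,X_i(m) + \sum_{i,j} h^i h^j\,\partial_j X_i(m). \]
The first sum lies in $H_m$ and so contributes nothing to the normal component, while the $N$-part of the second sum is $\sum_{i,j} h^i h^j\,\partial_j X_i^{p+k}(m) = \sum_{i,j} b_{ij}^k\, h^i h^j = b(h,h)^k$, by the very definition of the array $b_{ij}^k$ in Proposition \ref{prop:thm2}. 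This yields $[c''(0)]^N = b(h,h)$ and completes the argument.

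The step that carries the content is the passage to the frame $X_i$ together with the observation that the coefficient-derivative term $\sum_i \dot a^i(0)\,X_i(m)$ is horizontal. This is precisely where the hypothesis $c'(t)\in H$ \emph{for all} $t$ (rather than only at $t=0$) is essential: it is what allows me to express the velocity in a frame for $H$ with smooth coefficients, and the resulting first-order term is then automatically invisible to the normal projection at $m$. I expect this to be the only genuine subtlety; everything else is the bookkeeping of matching Taylor coordinates. Since the computation is uniform in $h$, no separate treatment of the degenerate case $c'(0)=0$ is needed.
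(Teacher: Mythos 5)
Your proof is correct and follows essentially the same route as the paper's: both reduce the statement to the single identity $[c''(0)]^N = b(h,h)$ and obtain it by expressing $c'(t)$ in a local frame for $H$ along the curve and differentiating once more. The only difference is that the paper chooses a frame adapted to the curve, with \emph{constant} coefficients $c'(t) = \sum_i h_i X_i(c(t))$, whereas you keep the fixed frame of Proposition \ref{prop:thm2} and allow variable coefficients $a^i(t)$, noting that the extra term $\sum_i \dot a^i(0)\,X_i(m)$ is horizontal and hence invisible to the normal projection; your variant is in fact slightly more robust, since the constant-coefficient frame used in the paper need not exist in degenerate situations (e.g.\ a non-constant horizontal curve with $c'(0)=0$, or a self-intersecting curve), which your uniform treatment handles without comment.
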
 
\begin{proof}
Choose $H$-coordinates at $m=c(0)$,
and let $(h,n)\in \RR^{p+q}$ be the corresponding Taylor coordinates of the parabolic arrow $[c]_H$.
Because $c'(t)\in H$ for $t$ near $0$, 
we can choose an $H$-frame $X_1,\ldots,X_n$ in a neighborhood $U$ of $m$ in such a way that
$c'(t) = \sum h_i X_i(c(t))$ at every point $c(t)\in U$.
With this set up, we compute the second derivative:
\[ \frac{d^2 c}{dt^2} = \frac{d}{dt} (\sum_i h_i X_i)\circ c
	= \sum_j \frac{ \partial}{\partial x_j} \left( \sum_i h_i X_i \right) \frac{d c_j}{dt}
	= \sum_{i,j} h_i\,\frac{ \partial X_j}{\partial x_i} \,\frac{d c_j}{dt}
.\]
Then, at $t = 0$, the normal component of $c''(0)$ is given by,
\[ c''(0)^N = \sum_{i,j} \partial_i X_j^N  h_i h_j = b(h,h), \]
where $b(h,h)$ is defined as in Proposition \ref{prop:thm2}.
It follows that the Taylor coordinates of $[c]_H$ are $(h,\frac{1}{2}b(h,h))$,
and therefore, by Proposition \ref{prop:gp1},
\[ \log([c]_H) = (h,0) .\]

\end{proof}

\section{ A jet point of view on the osculating group}\label{jet}

In this section we want to introduce a more conceptual point of view on  parabolic flows, considered as maps from the real line to the group of diffeomorphisms of  the manifold $M$.  The construction of the Hesenberg tangent group $T_HM$ involves the 2-jets of such maps. In order to allow generalization to the case of filtrations $H_1\subset H_2\subset ...\subset TM$, we shall consider jets of all orders. For the convenience of the reader, we will at first replace the group of diffeomorphisms of $M$ by a Lie group $G$.
 
\subsection { Jets with values in a Lie group}

 Let $G$ be a Lie group and $\EuFrak{g}$ be its Lie algebra. We denote by $U(\EuFrak{g})$ the universal enveloping algebra of $\EuFrak{g}$. 
 Recall that $U(\EuFrak{g})$ is also a bialgebra with coproduct
 $$\Delta : U(\EuFrak{g})\mapsto U(\EuFrak{g})\otimes U(\EuFrak{g})$$
 satisfying $\Delta (X)=X\otimes 1+1\otimes X$ for any $X\in\EuFrak{g}$.

 Let us define the group $J^{\infty}G$ of jets as follows: We consider the group of all $C^{\infty}$ maps from the real line ${\bf R}$ to $G$ which map $0$ to the identity of the group $G$ (with pointwise multiplication) and we quotient by the normal subgroup consisting of maps having zero Taylor series, i.e. maps $t\mapsto g(t)$ such that for any $f\in C^{\infty}(G)$, the function $t\mapsto f(g(t))$ has derivatives of all orders vanishing at 0.

 The jet of the map  $t\mapsto g(t)$ is determined by the Taylor series of the functions $t\mapsto f(g(t))=f(1)+tT_1f(1)+t^2T_2f(1)+...+t^kT_kf(1)+...$ where $T_k\in U(\EuFrak{g})$ is the left invariant differential operator on $G$ defined by $T_k(f)(1)=(f\circ g)^{(k)}(0)/k!$, the $k$-th derivative at zero of  the function $(f\circ g)(t)=f(g(t))$. 

A jet in $J^{\infty}G$ is therefore a formal power series $j_t=1+tT_1+t^2T_2+...+t^kT_k+...$ with coefficients $T_k\in U(\EuFrak{g})$. Moreover the elements $T_k$ of $ U(\EuFrak{g})$ must satisfy the condition of being group-like elements: indeed the obvious fact that $(f_1f_2)(g(t))=f_1(g(t))f_2(g(t))$ implies that $\Delta (j_t)=j_t\otimes j_t$, i.e.
$$\Delta (T_k)=\sum_{i+j=k}T_i\otimes T_j.$$

In particular, $\Delta (T_1)=T_1\otimes 1+1\otimes T_1$, which means that $T_1$ is an element of $\EuFrak{g}$.

Conversely any formal series $j_t=1+tT_1+t^2T_2+...+t^kT_k+...$ with coefficients $T_k\in U(\EuFrak{g})$ satisfying the above conditions, is the jet of a smooth map from the real line to $G$. This can be proved using the Lie algebra and the exponnential map. 
The Lie algebra of $J^{\infty}G$ is indeed he space $\EuFrak {j}^{\infty}G$ of series $tX_1+t^2X_2+...+t^kX_k+...$ with $X_k\in\EuFrak{g}$ for all $k\geq 1$. The exponential map ${\rm exp}: \EuFrak {j}^{\infty}G\rightarrow J^{\infty}G$ is easily checked to be  bijective, and sends the Lie algebra to the group-like elements.

In particular the exponential map gives the solution to the equations  $\Delta (T_k)=\sum_{i+j=k}T_i\otimes T_j$ defining $J^{\infty}G$, namely 
$$T_k=\sum_{l=1}^k{1\over l!}\sum_{k_1+k_2+...+k_l=k}X_{k_1}X_{k_2}...X_{k_l}$$
with $X_1, X_2,... $ is a sequence of elements of the Lie algebra $\EuFrak{g}$.

\subsection {The filtrated case}

Let us now assume that we are given a filtration
  $F$ of the Lie algebra 
 $\EuFrak{g}$, i.e. a sequence of vector subspaces
 $F_1\subset F_2\subset ...\subset F_l\subset \EuFrak{g}$
 such that $[F_i,F_j]\subset F_{i+j}$.
(Note that we define $F_k=\EuFrak{g}$ for all $k\geq l+1$.) As well known, to such a filtered Lie algebra one can associate a graded Lie algebra:
$${\rm gr}({\EuFrak g}, F)=F_1\oplus F_2/F_1\oplus F_3/F_2... \oplus \EuFrak{g}/F_l$$
with the obviously defined brackets $[F_i/F_{i-1},F_j/F_{j-1}]\subset F_{i+j}/F_{i+j-1}$.

A simple and conceptual way of looking at the associated graded algebra is as follows. 
 Consider the Lie subalgebra $\EuFrak {j}^{\infty}_FG$ of $\EuFrak {j}^{\infty}G$, consisting of series $tX_1+t^2X_2+...+t^kX_k+...$  with $X_k\in F_k$ for all $k\geq 1$. 
The Lie algebra $\EuFrak {j}^{\infty}_F(G)$ has an obvious ideal, namely its image by multiplication by $t$: the ideal $\EuFrak {i}^{\infty}_FG=t\EuFrak {j}^{\infty}_FG$ is the set of series $t^2X_1+t^3X_2+...+t^{k+1}X_k+...$  with $X_k\in F_k$ for all $k\geq 1$. 

Then the quotient Lie algebra $\EuFrak {j}^{\infty}_FG/\EuFrak {i}^{\infty}_FG$ is  clearly isomorphic to the graded Lie algebra ${\rm gr}({\EuFrak g}, F)=F_1\oplus F_2/F_1\oplus ...\oplus \EuFrak{g}/F_l$ associated to the filtered algebra $(\EuFrak{g}, F)$.

The above formalism allows to give a simple interpretation of the Lie group associated to the Lie algebra ${\rm gr}({\EuFrak g}, F)$. Indeed, the exponential maps the Lie algebra $\EuFrak {j}^{\infty}_FG$ to a subgroup $J^{\infty}_FG$ of $J^{\infty}G$, which is the set of jets $1+tT_1+t^2T_2+...+t^kT_k+...$ where
$$T_k=\sum_{l=1}^k{1\over l!}\sum_{k_1+k_2+...+k_l=k}X_{k_1}X_{k_2}...X_{k_l}$$
with $X_k\in F_k$ for all $k\geq 1$. The image by the exponential of the ideal $\EuFrak {i}^{\infty}_FG$ is a normal subgroup $I^{\infty}_FG$ of $J^{\infty}_FG$. The quotient  group 
$$J^{\infty}_FG/I^{\infty}_FG$$
is a Lie group with Lie algebra $\EuFrak {j}^{\infty}_FG/\EuFrak {i}^{\infty}_FG={\rm gr}({\EuFrak g}, F)$.

\subsection{Jets of flows on a manifold}

To deal with flows on a manifold $M$, we are interested in extending the above formalism to the case where $G$ is the group of diffeomorphisms of a manifold $M$ ( in fact an infinite dimensional Lie group). The associated Lie algebra $\EuFrak{g}$ is the Lie algebra $\Gamma (TM)$ of vector fields on $M$ (with the Lie bracket) and the enveloping algebra of $\EuFrak{g}$ is the algebra of differential operators on $M$ (which map constants to constants, i.e., whose zero order component is a constant). Note that $\EuFrak{g}=\Gamma (TM)$ is not only a Lie algebra but also a $C^{\infty}(M)$ module,  which is projective and of finite type, and compatible with the action of $\EuFrak{g}$ on $C^{\infty}(M)$ by derivations: for any $X, Y\in\Gamma (TM)$, $f,g\in C^{\infty}(M)$, $[fX,gY]=fg[X,Y]+fX(g)Y-gY(f)X$. 

The group $J^{\infty}G$ is the group of jets of flows which are identity at $t=0$. An element of $J^{\infty}G$ is a formal series $j_t=1+tT_1+t^2T_2+...+t^kT_k+...$ where each $T_k$ is a differential operator of order $k$, and satisfying the equations 
$$\Delta (T_k)=\sum_{i+j=k}T_i\otimes T_j$$
which concretely means that for any smooth functions $f_1$ and $f_2$ on $M$ 
$$T_k(f_1f_2)=\sum_{i+j=k}T_i(f_1)T_j(f_2).$$

The Lie algebra of $J^{\infty}G$ is the space $\EuFrak {j}^{\infty}G$ of series $tX_1+t^2X_2+...+t^kX_k+...$ with $X_k$ are vector fields  for all $k\geq 1$.  The exponential map describes the jets of flows as 
given by $$T_k=\sum_{l=1}^k{1\over l!}\sum_{k_1+k_2+...+k_l=k}X_{k_1}X_{k_2}...X_{k_l}$$
with $X_1, X_2,... $  a sequence of vector fields.
In other words a jet of flows is given (via the exponential map) by coordinates which are vector fields $X_1$, $X_2$,...

\subsection{Jets of flows on a manifold equipped with a filtration of the tangent bundle}

Let us now consider a  filtration $F$ of the Lie algebra of vector fields, i.e. $F_1\subset F_2\subset ...\subset F_l\subset\EuFrak{g}$ satisfying the following two conditions:

 1) $F_j$ is a sub-$C^{\infty}(M)$-module of the module of sections of the tangent bundle $TM$;
 
 2) $[F_i,F_j]\subset F_{i+j}$ (with $F_k$ consists of all vector fields for all $i\geq l+1$). 
 
 The first condition says that the filtration is defined pointwise on each fibers. For simplicity we shall restrict ourselves to the case where $F_j=\Gamma (H_j)$ is the module of sections of a subbundle $H_j$ of $TM$ with $H_1\subset H_2\subset ...\subset H_l\subset TM$ is a filtration of the tangent space subject to the conditions $[\Gamma (F_i),\Gamma (F_j)]\subset\Gamma (F_{i+j})$.
But more general cases are allowed.
The second condition is meaningless in the case where $l=1$, i.e., for a single subbundle $H\subset TM$. But for $l\geq 2$ the pointwise filtration is not enough and a condition on the brackets of vector field is needed. 
 
 As is well known, to such a filtration is associated a graded  Lie algebra  
 $${\rm gr}({\EuFrak g}, F)=F_1\oplus F_2/F_1\oplus F_3/F_2... \oplus \EuFrak{g}/F_l$$
with brackets $[F_i/F_{i-1},F_j/F_{j-1}]\subset F_{i+j}/F_{i+j-1}$. 
An important fact is that the above graded Lie algebra is a bundle equipped with a Lie algebra structure. Indeed, each quotient $F_j/F_{j-1}$ is a $C^{\infty}(M)$-module and the Lie bracket $F_i/F_{i-1}\times F_j/F_{j-1}\rightarrow F_{i+j}/F_{i+j-1}$ is  $C^{\infty}(M)$ bilinear, as easily checcked using the formula $[fX,gY]=fg[X,Y]+fX(g)Y-gY(f)X$. This shows that each fiber  $H_1\oplus H_2/H_1\oplus ...\oplus TM/H_l$ is equipped with a Lie algebra structure.

As above, we can associate to such a filtration a Lie subalgebra of $\EuFrak {j}^{\infty}(G)$, namely the set $\EuFrak {j}^{\infty}_FG$ of series $tX_1+t^2X_2+...+t^kX_k+...$  with $X_k\in F_k$ for all $k\geq 1$. 
The associate infinite dimensional Lie group  $J^{\infty}_FG$ is the group of jets of flows $1+tT_1+t^2T_2+...+t^kT_k+...$ which are of the form 
$$T_k=\sum_{l=1}^k{1\over l!}\sum_{k_1+k_2+...+k_l=k}X_{k_1}X_{k_2}...X_{k_l}$$
with $X_k\in F_k$ for all $k\geq 1$. 

The Lie algebra $\EuFrak {j}^{\infty}_FG$ has an ideal $\EuFrak {i}^{\infty}_FG=t\EuFrak {j}^{\infty}_FG$, consisting of series $t^2X_1+t^3X_2+...+t^{k+1}X_k+...$  with $X_k\in F_k$ for all $k\geq 1$. The quotient Lie algebra $\EuFrak {j}^{\infty}_FG/\EuFrak {i}^{\infty}_FG$ is the above considered graded Lie algebra ${\rm gr}({\EuFrak g}, F)=F_1\oplus F_2/F_1\oplus ...\oplus \EuFrak{g}/F_l$ associated to the filtered algebra $(\EuFrak{g}, F)$.

We propose to call the elements of $J^{\infty}_FG$ {\it jets of generalized parabolic flows}. 
The quotient group $J^{\infty}_FG/I^{\infty}_FG$ is the quotient of the group of generalized parabolic flows by the normal subgroup consisting of flows as above with $X_1=0$ and $X_j\in F_{j-1}$. The fiber at point $x\in M$ is the quotient of the group of generalized parabolic flows by the normal subgroup consisting of flows such that $X_1=0$ and $X_j\in F_{j-1}$ at point $x$. It might be called the {\it group of generalized parabolic arrows} at point $x\in M$.

\subsection{Back to the special case of one single subbundle}

Let us now explain how in the case of a filtration $H\subset TM$ by a single subbundle, we recover the above definition of parabolic flows. The condition for a jet of flow $j_t=1+tT_1+t^2T_2+...+t^kT_k+...$ to be in the subgroup $J^{\infty}_ FG$ is simply that $T_1=X_1\in\Gamma (H)$. This is what we called a (jet of) parabolic flow. A parabolic flow is given by a 2-jet $\exp (tX+t^2Y)=1+tX+{t^2\over 2}(X^2+2Y)$ where $X$ is a section of $H$ and $Y$ any vector field. The product of two such flows $(1+tX_1+{t^2\over 2}(X_1^2+2Y_1))(1+tX_2+{t^2\over 2}(X_2^2+2Y_2))$ is calculated as follows (with the rule $t^3=0$) being 

$$1+t(X_1+X_2)+{t^2\over 2}(X_1^2+X_2^2+2X_1X_2+2Y_1+2Y_2)$$
which is the same as:
$$1+t(X_1+X_2)+{t^2\over 2}((X_1+X_2)^2+2(Y_1+Y_2+{1\over 2}[X_1,X_2]))$$

Now the subgroup of parabolic jets of the form $\exp (t^2Y)=1+{t^2\over 2}Y$ where $Y$ is a section of $H$ is  normal, and the quotient is clearly the Heisenberg tangent group $T_HM=H\oplus TM/H$.

\section{$H$-adapted Exponential Maps.}\label{hypind10:section:exp}

\subsection{Exponential maps for parabolic arrows}

We now come to the definition of {\em exponential map} that is suitable for the  fiber bundle of parabolic arrows.
We will see that such an exponential map is equivalent to what analysts mean by an osculating structure. 
Our Definition  \ref{hypind10:def:exp2} of $H$-adapted exponential maps clarifies the intrinsic geometry behind the explicit coordinate formulas for osculating structures proposed by analysts.
\vskip 6pt

Before introducing our notion of $H$-adapted exponential map, let us first state  exactly what we mean by `exponential map'.

\begin{definition}\label{hypind10:def:exp}
An {\em exponential map} is a smooth map
\[\exp\colon TM\to M\]
whose restriction $\exp_m\colon T_mM\to M$ to a fiber $T_mM$ maps $0\mapsto m$, while the derivative $ D\exp_m$ at the origin $0\in T_mM$ is the identity map $T_0(T_mM)=T_mM \to T_mM$. 
\end{definition}
A more specialized notion of `exponential map'  associates it to a connection $\nabla$ on $TM$.
For a vector $v\in T_mM$ one can define $\exp^\nabla_m(v)$ to be the end point $c(1)$ of the unique curve $c(t)$ that satisfies
\[ c(0)=m,\; c'(0)=v ,\; \nabla_{c'(t)}c'(t) = 0.\]
Such a map certainly satisfies the property of Definition \ref{hypind10:def:exp}.
One could specialize further and let $\nabla$ be the Levi-Civita connection for a Riemannian metric on $M$, in which case the curve $c$ will be a geodesic.
But the looser Definition \ref{hypind10:def:exp} suffices for the purpose of defining a pseudodifferential calculus on $M$.
Given such an exponential map, we can form a diffeomorphism $h$ in a neighborhood of the zero section $M\subset TM$,
\[ h\;\colon\; TM\to M\times M\;\colon\; (m,v)\mapsto (\exp_m(v), m).\]
Given the Schwartz kernel $k(m,m')$ of a linear operator $C^\infty(M)\to \mathcal{D}(M)$, we can pull it back to a distribution on $TM$ by means of the map $h$.
The singularities of the pullback $k\circ h$ reside on the zero section $M\in TM$.
The classical pseudodifferential calculus is obtained by specifying the asymptotic expansion of $k\circ h$ near the zero section.
In order to develop the Heisenberg pseudodifferential calculus in an analogous manner we must identify the natural notion of exponential map for the fiber bundle $T_HM$ of parabolic arrows.
Our geometric insight in the group structure of $T_HM$ leads to a natural definition.
Observe that Definition \ref{hypind10:def:exp} is equivalent to the condition that each curve $c(t)=\exp_m(tv)$ has tangent vector $c'(0)=v$ at point $m=c(0)$.
This immediately suggests the following generalization.

\begin{definition}\label{hypind10:def:exp2}
Let $M$ be a manifold with distribution $H\subset TM$. 
An {\em $H$-adapted exponential map} for $(M,H)$ is a smooth map
\[ \exp \;\colon\; T_HM \to M ,\]
such that for each parabolic arrow $v\in T_HM_m$ the curve $c(t)=\exp_m(\delta_tv)$ in $M$ represents the parabolic arrow $v$, i.e., $[c]_H=v$ in $T_HM_m$.
\end{definition}
If we choose a section $j\colon N\hra TM$ we may identify $H\oplus N$ with the tangent bundle $TM$. The composition
\[ T_HM \stackrel{\log}{\longrightarrow} H\oplus N \stackrel{j}{\longrightarrow} TM ,\]
then identifies $T_HM$, as a smooth fiber bundle, with $TM$.
Every $H$-adapted exponential map is identified, in this way, with an ordinary exponential map $\exp\colon TM\to M$.
However, not every exponential map $TM\to M$ induces an $H$-adapted exponential map.
Definition \ref{hypind10:def:exp} involves only the {\em first derivative} of the map. 
But in order to be $H$-adapted, an exponential map must satisfy a further requirement on its {\em second derivative}.
Definition \ref{hypind10:def:exp2} provides a natural geometric way to encode this rather delicate second order condition, by means of our notion of parabolic arrows.

The key property that makes an arbitrary exponential map $H$-adapted can be stripped down to the condition that every curve $c(t) = \exp(th)$, for every $h\in H_m$, represents the parabolic arrow $[c]_H=h\in H_m$.
This, in turn, is equivalent to the requirement that there exists a second curve $c_2(t)$ that is everywhere tangent to $H$ and such that $c_2'(0)=c'(0)$ and such that $c_2''(0)$ agrees with $c''(0)$ in the directions transversally to $H_m$
(Proposition \ref{prop:hug}).
We are not sure if this makes things any clearer, but it does bring out, to some extent, the geometric meaning of $H$-adaptedness.
The point is that, while the bundle $H$ may not be integrable,
one still wants the exponential map to be such that {\em rays} in $H$ are mapped to curves in $M$ that `osculate' the bundle $H$ as closely as possible,
as measured by the {\em second} derivative in the transversal direction.

\subsection{$H$-adapted exponential maps and connections}

Further geometric insight in the distinctive features of $H$-adapted exponential maps is obtained if we consider exponential maps that arise from connections. 
This consideration is also useful because it implies the {\em existence} of $H$-adapted exponential maps.

Observe that the choice of a distribution $H\subseteq TM$ is equivalent to a reduction of the principal frame bundle $fTM$ to the subbundle $fT_HM$, whose fiber at $m\in M$ consists of frames $(e_1,\ldots,e_n)$ in $T_mM$ for which $(e_1,\ldots, e_p)$ is a frame in $H_m$. 
(A local section of $fT_HM$ is what we have called an $H$-frame.)  
The fiber bundle $fT_HM$ is a principal bundle with structure group,
\[ G = \{ \left(\begin{array}{cc} A & B \\ 0 & C    \end{array}\right) \in {\rm End}\,(\RR^p\oplus \RR^q) \} \subseteq GL_{p+q}(\RR) .\]  
In other words, a Heisenberg structure on $M$ is equivalent to a $G$-structure on $TM$, and the natural connections to consider are connections on the principal $G$-bundle $fT_HM$.
For the associated affine connection $\nabla$  on $TM$, this simply means that if $X\in \sm(H)$, then $\nabla_Y X \in \sm(H)$, in other words, $\nabla$ is a connection on $TM$ that can be restricted to a connection on $H$.
One easily verifies that this last condition implies that the exponential map $\exp^\nabla\colon TM\to M$ is $H$-adapted.
We abuse the notation $\exp^\nabla$ for the induced exponential map on parabolic arrows,
\[ \exp^\nabla\;\colon\; T_HM\to M,\]
even though this also involves choosing an explicit section $j\colon N\to TM$.
In practice one may choose to work only with $H$-adapted exponential maps induced by $G$-connections.
This approach has the advantage that it does not require the notion of parabolic arrows.
However, it is insufficient to encompass the exisiting definitions of osculating structures found in the literature.
We will see, for example, that the osculating structures of Beals and Greiner, while they do arise from $H$-adapted exponential maps, do not arise from $G$-connections.

\subsection{$H$-adapted exponential maps from $H$-coordinates}

A second method for constructing $H$-adapted exponential maps is by means of a smooth system of $H$-coordinates. 
A smooth system of $H$-coordinates is a choice of $H$-coordinates 
\[ E_m\;\colon\; \RR^n\to M ,\]
at each point $m\in M$, in such a way that the map $(m,x)\mapsto E_m(x)$ is smooth.
Observe that, in general, such a smooth system will only exist {\em locally},
and the exponential maps defined by means of such a system of coordinates are likewise only locally defined. 
Such a local definition suffices for the purpose of specifying the Heisenberg calculus.

Let
\[ F_m\;\colon\; \RR^n\to T_HM_m \]
denote the Taylor coordinates induced by the $H$-coordinates $E_m$.
It is immediately clear from the definition of Taylor coordinates (Definition \ref{hypind10:def:taylorcoord}) that the composition
\[ \exp_m=E_m\circ F_m^{-1} \;\colon\; T_HM_m \to M \]
is an $H$-adapted exponential map.

This particular way of obtaining an $H$-adapted exponential map, while geometrically more clumsy, is most useful in explaining existing definitions of osculating group structures in the literature.
The map $E_m$ identifies the coordinate space $\RR^n$ with a neighborhood of a point $m$ in the manifold $M$, while $F_m$ identifies the same coordinate space with the group $T_HM_m$.
The typical procedure of analysts is to specify explicit formulas for a group structure on the coordinate space $\RR^n$.
But in each case their procedure is clarified if we identify their formulas as specific instances of our Taylor coordinates $F_m$.
We will analyze two exemplary cases.

\vskip 6pt
{\bf The osculating structures of Folland and Stein.}
Osculating structures on contact manifolds first appeared in the work of Folland and Stein (see \cite{FS74}, sections 13 and 14).
The construction of Folland and stein can be summarized as follows (we generalize slightly).

On a $(2k+1)$-dimensional contact manifold, with $2k$-dimensional bundle $H\subseteq TM$,
choose a (local) $H$-frame $X_1,\ldots,X_{2k+1}$.
It can be shown that this frame can be chosen such that
\begin{align*}
  [X_i,X_{k+i}] & = X_{2k+1} \,{\rm mod}\; H,\; \text{\rm for}\,i=1,\ldots, p, \\
  [X_i,X_j] & = 0 \,{\rm mod}\; H,\; \text{\rm for all other values of}\, i\le j .
\end{align*}
Then, for $v\in \RR^{2k+1}$, let $E_m(v)$ be the endpoint $c(1)$ of the integral curve $c(t)$ of the vector field $\sum v_iX_i$ with $c(0) = m$, in other words,
\[ E_m\;\colon\; \RR^{2k+1}\to M\;\colon\; v\mapsto  \Phi^1_{\sum v_iX_i}\,(m) ,\]
where $\Phi^t_Y$ denotes the flow generated by a vector field $Y$.
For Folland and Stein, the `osculating Heisenberg structure' on $M$ is the family of maps $E_m$, identifying an open subset of the Heisenberg group $H_k = \RR^{2k+1}$ (with its standard coordinates)  with a neighborhood of $m\in M$.

In the framework we have established here, we see that the commutator relations for the $H$-frame allow us to identify the basis $X_i(m)\in H_m\oplus N_m$ of the osculating Lie algebra with the standard basis of the Lie algebra of the Heisenberg group.
Accordingly, we have a group isomorphism,
\begin{align*}
 F_m\;\colon\; H_k=\RR^{2k+1}&\to H_m\oplus N_m          \to T_HM_m \\
                                                   v &\mapsto   \sum v_i X_i(m) \mapsto \exp(\sum v_i X_i(m)) 
 \end{align*}
 We recognize $F_m$ as the Taylor coordinates on $T_HM_m$ for the $H$-coordinates $E_m$ at $m$. 
We see that the osculating structure on $M$, as defined by Folland and Stein,
can be interpreted as the $H$-adapted exponential map $\exp_m = E_m\circ F_m^{-1}$.

Alternatively, we can conceptualize the construction of Folland and Stein as specifying an $H$-adapted exponential map by means of a (locally defined) {\em flat} $G$-connection $\nabla$ on $TM$.
The connection $\nabla$ is the one for which the $H$-frame $X_1,\ldots,X_n$ is parallel.

\vskip 6pt
{\bf The Heisenberg manifolds of Beals and Greiner.}
Our second example is the group structure defined by Beals and Greiner on the coordinate space for given $H$-coordinates $E_m\colon \RR^n\to U$ at a point $m$ (see \cite{BG88}, section 1.1).
Beals and Greiner only consider the case where $H\subset TM$ is an arbitrary distribution of codimension one.
Proposition \ref{prop:thm2} provides an explicit formula for composition in the osculating group $T_HM_m$ in terms of the Taylor coordinates $F_m\colon \RR^n\cong T_HM_m$. 
This formula, when specialized to the case $k=1$, agrees exactly with the unexplained string of formulas (1.8), (1.11), (1.15) in \cite{BG88}.
Those formulas can thus be reinterpreted quite simply as the coordinate expression of an $H$-adapted exponential map $\exp_m=E_m\circ F_m^{-1}$.

\vskip 6pt
If we compare these two examples, we see that Beals and Greiner start their construction with an arbitrary system of $H$-coordinates $E_m$, and are therefore required to compensate by a quadratic correction term in the Taylor coordinates $F_m$.
Folland and Stein, on the other hand, choose a very specific type of coordinates $E_m$ that is better suited to the Heisenberg structure, and as a result obtain Taylor coordinates $F_m$ that are simply the linear coordinates on $H_m\oplus N_m$.    
Both strategies can be used in the case of more general distributions $H\subseteq TM$.

\section{The Parabolic Tangent Groupoid}\label{section:groupoid}

In \cite{Co94} section II.5, Connes introduces the tangent groupoid as part of a streamlined proof of the Atiyah-Singer index theorem.
The convolution algebra of this groupoid encodes the quantization of symbols (as functions on $T^*M$) to classical pseudodifferential operators.
The smoothness of the tangent groupoid proves, in an elegant geometric way, that the principal symbol of a classical pseudodifferential operator is invariantly defined as a distribution on $TM$ (or, equivalently, its Fourier transform on $T^*M$).

In this section we discuss the definition of a tangent groupoid that plays the same role for the Heisenberg calculus.
Our notion of parabolic arrows immediately suggests a topology for this groupoid.
But our proof that the groupoid has a well-defined smooth structure relies most crucially on the notion of $H$-adapted exponential maps.
The succesful definition of a tangent groupoid by means of $H$-adapted maps can be considered as proof that the principal symbol in the Heisenberg calculus is well-defined.
More specifically, it proves that we can use an arbitrary $H$-exponential map to pull back Schwartz kernels from $M\times M$ to $T_HM$ when we develop the calculus.

\subsection{The parabolic tangent groupoid and its topology}\label{section:THM}

As a generalization of Connes' tangent groupoid, which relates the total space of the tangent bundle $TM$ to the pair groupoid $M\times M$, we define a similar groupoid in which the bundle of abelian groups $TM$ is replaced by the fiber bundle of osculating groups $T_HM$.
We shall refer to this groupoid as the {\em parabolic tangent groupoid} of a manifold with distribution $H\subseteq TM$, and denote it by $\THM$.

As an algebraic groupoid, $\THM$ is the disjoint union,
\[ \THM = (\bigcup_{t\in (0,1]}\GG_t) \cup (\bigcup_{m\in M} \GG_m) ,\]
of a parametrized family of pair groupoids with the collection of osculating groups,
\begin{align*}
 \GG_t = M\times M,\; t\in (0,1],\\
 \GG_m = T_HM_m,\; m\in M .
\end{align*}
Clearly, the union $\cup \GG_t = M\times M\times (0,1]$ by itself is a smooth groupoid,
and the same is true, as we have seen, for the bundle of osculating groups $\cup \GG_m = T_HM$. 
We write $\GG_0 = T_HM$, and $\GG_{(0,1]}=M\times M\times (0,1]$.
Each groupoid $\GG_t,t\in [0,1]$ has object space $M$, 
and the object space for the total groupoid $\GG = \THM$ is the manifold, 
\[ \GG^{(0)} = M \times [0,1] .\]
We will endow $\THM$ with the structure of a manifold with boundary, by glueing $\GG_0$ as the $t=0$ boundary to $\GG_{(0,1]}$.
The topology on $\THM$ is such that $\GG_{(0,1]}$ is an open subset of $\THM$.
The topology on $\THM$ can be defined very nicely by means of our parabolic arrows. 
The construction in \cite[II.5]{Co94} generalizes immediately if we replace tangent vectors by parabolic arrows.

\begin{definition}\label{def:top}
A curve $(a(t),b(t),t)$ in $\GG_{(0,1]}=M\times M\times (0,1]$ converges, as $t\to 0$,
to a parabolic arrow $(m,v) \in T_HM$ if,
\begin{align*}
 M \ni m & = \lim_{t\to 0} a(t) = \lim_{t\to 0} b(t) , \\
 T_HM_m \ni v & =  [a]_H \ast [b]_H^{-1} ,
\end{align*} 
where we assume that $a'(0)\in H$ and $b'(0)\in H$. 
\end{definition}
Recall that $[a]_H$ and $[b]_H$ denote the parabolic arrows defined by the curves $a, b$, while the expression $[a]_H\ast[b]_H^{-1}$ denotes the product of $[a]_H$ with the inverse of $[b]_H$ in the osculating group $T_HM_m$.
If $H=TM$, the osculating groups are abelian, and the definition simplifies to
\[ T_mM \ni v = a'(0) - b'(0) = \lim_{t\to 0} \frac{a(t)-b(t)}{t} .\]
This is precisely the topology of the tangent groupoid as defined by Connes in \cite{Co94}.

It is easy to see that the groupoid operations for $\THM$ are continuous.
For example, in $\GG_{(0,1]}$ we have,
\[ (a(t),b(t),t)\;\cdot\;(b(t),c(t),t) = (a(t),c(t),t) ,\]
while in $\GG_0$,
\[ ([a]_H \ast [b]_H^{-1}) \,\ast\, ([b]_H \ast [c]_H^{-1}) = [a]_H \ast [c]_H^{-1} ,\] 
assuming that $a(0)=b(0)=c(0)$ and $a'(0),b'(0),c'(0)\in H_m$.
However, we will not rigorously develop this point of view. 
Instead, we glue $\GG_0$ to $\GG_{(0,1]}$ in an alternative way, more convenient for practical use, by defining a smooth structure on $\THM$.

\subsection{Charts on the parabolic tangent groupoid}

We now define charts on the groupoid $\THM$ by means of exponential maps,
completely analogous to the construction in \cite[II.5]{Co94}.

Suppose we have an $H$-adapted exponential map,
\[ \exp\;\colon\; T_HM\to M .\]
We define a map, 
\[ \psi\;\colon\;T_HM\times [0,1) \to \THM, \]
by,
\begin{align*}
 \psi(m,v,t) & = (\exp_m(\delta_tv),m,t), \;{\rm for}\, t>0 ,\\
 \psi(m,v,0) & = (m,v) \in T_HM_m .
\end{align*}
Here $\delta_t$ denotes the Heisenberg dilation in the osculating group $T_HM_m$.
The smooth structure on $T_HM\times [0,1)$ induces a smooth structure in an open neigborhood of $\GG_0=T_HM$ in $\THM$.

At first sight it may seem that the main modification to the construction of the classical tangent groupoid is the introduction of the parabolic dilations $\delta_tv$ to replace the simple `blow-up' $tv$ of Connes.
This simple idea is indeed the obvious thing to do if one wants to define a tangent groupoid for the Heisenberg calculus.
But the real crux of the definition of the parabolic tangent groupoid is the requirement that the exponential map must be $H$-adapted.
If one uses arbitrary exponential maps to define the charts on $\THM$, in combination with the parabolic dilations $\delta_t$, then the resulting charts are {\em not} smoothly compatible (i.e., transition functions would not be smooth).
It is precisely for this reason that arbitrary exponential maps cannot be used to define Heisenberg pseudodifferential operators.

\vskip 6pt
To better understand the glueing of the bundle $T_HM$ to $M\times M\times (0,1]$ it may be helpful to recall the construction of an $H$-adapted exponential map by means of a system of $H$-coordinates (in an open set $U\subseteq M$),
\[ E_m \;\colon\; \RR^n\to U .\]
Recall that $H$-coordinates are such that, for each $m\in U$, the coordinate vectors $dE_m(\partial/\partial x_i)$, for $i=1,\ldots,p$, are vectors in $H_m$.
As before, let
\[ F_m \;\colon\; \RR^{p+q}\to T_HM_m\]
denote the Taylor coordinates on the osculating group $T_HM_m$, induced by the $H$-coordinates $E_m$ (Definition \ref{hypind10:def:taylorcoord}). 
As was discussed, the composition $\exp_m(v)=E_m\circ F_m^{-1}$ is a (local) $H$-adapted exponential map.
From this perspective, equivalent to the chart $\psi$ that was defined above we could work with the chart,
\[ \psi'\;\colon\;U\times \RR^p\times \RR^q \times [0,1)  \to \THM, \]
defined by,
\begin{align*}
 \psi'(m,h,n,t) & = (E_m(th,t^2n),m,t), \;{\rm for}\, t>0 ,\\
 \psi'(m,h,n,0) & = F_m(m,h,n) \in T_HM_m .
\end{align*}
This description brings out very clearly what is going on.
The expression $(E_m(th,t^2n),m,t)$ for $t>0$ corresponds to a `blow up' of the diagonal in $M\times M$ by a factor $t^{-1}$ in the direction of $H$, and by a factor $t^{-2}$ in the direction  transversal to $H$. 
This is precisely what one would expect.

But the success of the construction crucially depends on the choice of coordinates at $t=0$, involving the Taylor coordinates $F_m$.
And this is the subtle ingredient in the construction of the groupoid.
Recall that, if we make the {\em canonical} identification of $T_HM$ with the bundle $H\oplus N$ (by means of the Lie exponential map in the fibers), then the Taylor coordinates $F_m$ are explicitly given by,
\[ \log\,F_m(m,h,n) = (h,n-\tfrac{1}{2}b_m(h,h)) \in H_m\oplus N_m ,\]
where $b_m(h,h)$ is a quadratic form that depends on the coordinates $E_m$ (see Propositions \ref{prop:thm2} and \ref{prop:gp1}).
The necessity and nature of this quadratic correction term $b_m(h,h)$ would be very hard to guess if one had to construct the parabolic tangent groupoid from scratch.

If the coordinates $E_m$ are chosen in such a way that the corresponding bilinear form $b_m$ is skew-symmetric (for example, as in the construction of Folland and Stein),
then this quadratic correction term vanishes, and we can work simply with the natural coordinates on $H_m\oplus N_m$ at $t=0$.
Correspondingly, an alternative solution to the construction of the parabolic tangent groupoid would be to work with `preferred' coordinate systems $E_m$, i.e., $H$-coordinates for which $b_m$ is skew-symmetric.

\subsection{Proof that the smooth structure is well-defined}

We now show that, with the above choices, the manifold structure on $\THM$ is well defined. 
The proofs of Propositions \ref{hypind40:prop:manifoldTHM} and \ref{prop:sect2-cont} show the relevance of the corrected groupoid coordinates at $t=0$, if arbitrary $H$-coordinates $E_m$ are allowed.
The basic ingredient of the proof is the following technical lemma.
\begin{lemma}\label{hypind40:lemma:technical}
Let $\phi\colon T_HM\to T_HM$ be a diffeomorphism that preserves the fibers; fixes the zero section $M\subset T_HM$; and at the point $m$ has derivative $D\phi_m={\rm id}$, and a second derivative that satisfies $D^2\phi_m(h,h)\in H_m$, for $h\in H_m$.
Then the map,
\[  \widetilde{\phi} \;\colon\; T_HM\times \RR\  \to T_HM\times \RR ,\]
defined by,
\begin{align*}  
  \widetilde{\phi}(m,v,t) & =  (\delta_t^{-1}\phi(m,\delta_t v), t) ,\\
  \widetilde{\phi}(m,v,0) & =  (m,v,0),
\end{align*}
is a diffeomorphism.
\end{lemma}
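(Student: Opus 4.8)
The plan is to reduce the whole statement to a local coordinate computation and to recognize that its only real content is the \emph{smoothness of $\widetilde\phi$ across the hypersurface $t=0$}; away from $t=0$ the map is manifestly smooth with smooth inverse, since $\delta_t,\delta_t^{-1}$ are diffeomorphisms depending smoothly on $t\ne 0$ and $\phi$ is a diffeomorphism. First I would work in a local trivialization of $T_HM$ coming from $H$-coordinates together with the fiberwise identification $T_HM\cong H\oplus N$, in which $\delta_t(h,n)=(th,t^2n)$ and $\phi(m,h,n)=(m,g_m(h,n))$ for a smooth family of fiber maps $g_m\colon\RR^p\times\RR^q\to\RR^p\times\RR^q$. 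The hypotheses translate into $g_m(0)=0$, $Dg_m(0)=\mathrm{id}$, and $[D^2g_m(0)((h,0),(h,0))]^N=0$ for all $h$. Since $\widetilde\phi$ leaves the base point $m$ and the parameter $t$ unchanged and acts fiberwise as $v\mapsto\delta_t^{-1}g_m(\delta_t v)$, it suffices to prove joint smoothness of this expression in $(h,n,t)$ near $t=0$, with smooth dependence on the parameter $m$.

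Next I would introduce the weight grading (weight $1$ on the $h$-variables, weight $2$ on the $n$-variables) and note that $\delta_t$ scales a monomial $v^\alpha=h^{\alpha_H}n^{\alpha_N}$ by $t^{\mathrm{wdeg}(\alpha)}$, where $\mathrm{wdeg}(\alpha)=|\alpha_H|+2|\alpha_N|$, while $\delta_t^{-1}$ divides the $i$-th output by $t^{w_i}$, with $w_i=1$ for an $H$-output and $w_i=2$ for an $N$-output. Thus the Taylor monomial $c^i_\alpha v^\alpha$ of $g_m$ contributes
\[ c^i_\alpha\, t^{\mathrm{wdeg}(\alpha)-w_i}\,v^\alpha \]
to $\delta_t^{-1}g_m(\delta_t v)$, which is smooth in $(v,t)$ exactly when $\mathrm{wdeg}(\alpha)\ge w_i$. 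The crux is then a finite check that every monomial violating this inequality has vanishing coefficient: for an $H$-output the sole offender is the constant term, killed by $g_m(0)=0$; for an $N$-output the offenders are the constant term and the monomials $h_j$ linear in a single horizontal variable, whose coefficient is the off-diagonal block $[Dg_m(0)]_{NH}=0$ because $Dg_m(0)=\mathrm{id}$. At the same time I would record that the surviving $t^0$-terms reproduce $(h,n)$ at $t=0$: the $H$-output collects $[Dg_m(0)]_{HH}h=h$, while the $N$-output collects $[Dg_m(0)]_{NN}n+\tfrac12[D^2g_m(0)((h,0),(h,0))]^N=n$, the quadratic shear being annihilated precisely by the second-order hypothesis. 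This is the exact role of the $D^2$-condition: it forces the smooth extension to equal the prescribed value $(m,v,0)$ rather than a nontrivial quadratic shear.

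To upgrade this formal computation to genuine $C^\infty$-smoothness I would apply Taylor's formula with integral remainder to each $g^i_m$ to order $d$, writing the remainder as $R^i_d(v)=\sum_{|\alpha|=d+1}v^\alpha r^i_\alpha(m,v)$ with $r^i_\alpha$ smooth (Hadamard's lemma). The polynomial part is controlled by the monomial analysis above, and for the remainder one uses $\mathrm{wdeg}(\alpha)\ge|\alpha|=d+1$ to obtain
\[ t^{-w_i}R^i_d(\delta_t v)=\sum_{|\alpha|=d+1} t^{\,\mathrm{wdeg}(\alpha)-w_i}\,v^\alpha\, r^i_\alpha(m,\delta_t v), \]
a sum of terms carrying a factor $t^{(d+1)-w_i}$ times functions smooth in $(m,v,t)$. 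Taking $d$ arbitrarily large shows $\widetilde\phi$ is $C^k$ for every $k$, hence $C^\infty$, uniformly in the smooth parameter $m$.

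Finally, for the diffeomorphism claim I would check that $\phi^{-1}$ satisfies the same three hypotheses: it fixes the zero section, $D(\phi^{-1})_m=\mathrm{id}$, and differentiating $\phi\circ\phi^{-1}=\mathrm{id}$ twice gives $D^2(\phi^{-1})_m=-D^2\phi_m$, so the condition $D^2(\phi^{-1})_m(h,h)\in H_m$ persists. Hence $\widetilde{\phi^{-1}}$ is smooth by the same argument, and since $\delta_t^{-1}\phi^{-1}(\delta_t\,\cdot\,)$ and $\delta_t^{-1}\phi(\delta_t\,\cdot\,)$ are mutually inverse for each $t\ne 0$ and both restrict to the identity at $t=0$, we conclude $\widetilde{\phi^{-1}}\circ\widetilde\phi=\widetilde\phi\circ\widetilde{\phi^{-1}}=\mathrm{id}$, so $\widetilde\phi$ is a diffeomorphism. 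I expect the main obstacle to be precisely the bookkeeping at $t=0$: showing that the would-be negative powers of $t$ produced by $\delta_t^{-1}$ are exactly cancelled by the vanishing of the low-weight Taylor coefficients, and controlling the remainder uniformly. This is where the two derivative hypotheses on $\phi$ earn their keep.
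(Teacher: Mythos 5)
Your proof is correct, and its core strategy is the same as the paper's: identify $T_HM$ with $H\oplus N$, Taylor-expand the fiber maps at the origin, and observe that conjugation by $\delta_t$ rescales each monomial by $t^{\mathrm{wdeg}(\alpha)-w_i}$, so that the three hypotheses kill exactly the monomials that would carry negative powers of $t$. The differences are in the execution, and they work in your favor at the two places where the paper is sketchiest. First, the paper expands with merely \emph{bounded} remainders, obtaining $\widetilde{\phi}(m,v,t)=(m,\,v+\sum_{k=1}^r a_kt^k+R_rt^r,\,t)$ with $R_r$ uniformly bounded, and then asserts that this implies smoothness; as stated this is a gloss, since an asymptotic expansion to all orders with bounded remainders does not by itself yield smoothness of the extension (one needs control of derivatives of the remainder as well). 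Your use of Hadamard's lemma, writing $R^i_d(v)=\sum_{|\alpha|=d+1}v^\alpha r^i_\alpha(m,v)$ with $r^i_\alpha$ smooth, repairs this: since $\delta_tv$ is polynomial in $(v,t)$ and every surviving power of $t$ is a nonnegative integer, every term in your decomposition is manifestly smooth in $(m,v,t)$, so $\widetilde{\phi}$ is $C^\infty$ outright — in fact already at $d=2$, so the ``take $d$ arbitrarily large, hence $C^k$ for all $k$'' phrasing is more cautious than necessary. Second, you explicitly establish the diffeomorphism claim by checking that $\phi^{-1}$ satisfies the same hypotheses (with $D^2(\phi^{-1})_m=-D^2\phi_m$, which preserves the $H_m$-valued condition), so that $\widetilde{\phi^{-1}}$ is smooth and mutually inverse with $\widetilde{\phi}$; the paper's proof establishes only the smoothness of $\widetilde{\phi}$ and leaves the inverse implicit. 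A further small merit of your weighted-degree bookkeeping ($\mathrm{wdeg}(\alpha)=|\alpha_H|+2|\alpha_N|$ against the output weight $w_i$) is that it generalizes verbatim to filtered manifolds of higher step, which is the setting the paper alludes to in its closing remark.
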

\begin{proof}
Clearly, $\widetilde{\phi}$ is smooth on the open subset where $t\ne 0$.
We must prove that $\widetilde{\phi}$ is smooth in a neighborhood of the $t=0$ fiber.

For convenience of notation, we identify $T_HM$ with $H\oplus N$ via the logarithm.
The proof is based on a simple Taylor expansion near $t=0$. 
For a choice of coordinates on $H\oplus N$ 
we have,
\[ \phi(m,v) = \phi(m,0) +D\phi_m(v)+\frac{1}{2}D^2\phi_m(v,v)+R(m,v) .\]
The remainder term $R=R(m,v)$ satisfies a bound $|R|<C|v|^3$, for $|v|<1$.
Now write $v=h+n$ with $h \in H_m, n\in N_m$. Then,
\begin{align*}
  &\phi(m,th+t^2n) \\
  & = \phi(m,0) +tD\phi_m(h)+t^2D\phi_m(n) \\ 
    & +\frac{1}{2}t^2D^2\phi_m(h,h)+ t^3D^2\phi_m(h,n) + \frac{1}{2}t^4D^2\phi_m(n,n) + R(m,th+t^2n)\\
    & =  \phi(m,0) + tD\phi_m(h)+ t^2D\phi_m(n)
    + \frac{1}{2}t^2D^2\phi_m(h,h)+ t^3R'.
\end{align*}    
The error term $R'=r'(m,h,n,t)$, satisfies a bound $|R'|\le C$ for $|h|<\epsilon |t|^{-1}$, $|n|<\epsilon |t|^{-2}$. 
Observe that these inequalities hold in an open neighborhood of the $t=0$ fiber in $T_HM\times \RR$.
 
The assumptions on $\phi$ allow the simplification,
\[ \phi(m,\delta_tv)  =  (m,\, th+ t^2n+\frac{1}{2}t^2D^2\phi_m(h,h)+t^3R') ,\]
where $D^2\phi_m(h,h)\in H_m$. We find,
\[ \delta_t^{-1}\phi(m,\delta_t v)  =  (m,\, v+tR'') ,\]
where, again, the coefficient of the remainder $R''$ is uniformly bounded in a neighborhood of the $t=0$ fiber.
This implies continuity of $\widetilde{\phi}$.

By the same reasoning, expanding $\phi$ in a higher order Taylor series, one obtains, 
\[ \widetilde{\phi}(m,v,t) = (m,\, v+ \sum_{k=1}^r a_k t^k + R_r t^r,t) ,\]
where the coefficients $a_k=a_k(m,v)$ are smooth functions, independent of $t$, arising from the derivatives of $\phi$, while the coefficient $R_r$ of the remainder is uniformly bounded in a neighborhood of $t=0$.
This implies smoothness of $\widetilde{\phi}$.
  
\end{proof}

\begin{proposition}\label{hypind40:prop:manifoldTHM}
For different choices of $H$-adapted exponential maps 
 the charts $\psi\colon T_HM\times [0,1]\to \THM$, defined above, have smooth transition functions.
In other words, $\THM$ has a well-defined structure of smooth manifold, independent of the choice of $H$-adapted exponential map.
\end{proposition}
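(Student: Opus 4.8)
The plan is to reduce the entire statement to the single technical Lemma~\ref{hypind40:lemma:technical}. Fix two $H$-adapted exponential maps $\exp^1,\exp^2\colon T_HM\to M$, with associated charts $\psi^1,\psi^2\colon T_HM\times[0,1)\to\THM$ as defined above; together with the tautological chart on the open piece $\GG_{(0,1]}=M\times M\times(0,1]$ these cover $\THM$. There are two kinds of transition functions to examine. The transition between a chart $\psi^i$ restricted to $t>0$ and the chart on $\GG_{(0,1]}$ is $(m,v,t)\mapsto(\exp^i_m(\delta_tv),m,t)$, which is manifestly smooth and is a local diffeomorphism for each fixed $t>0$, since $\exp^i_m$ is a local diffeomorphism near the origin and $\delta_t$ is invertible. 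So the only substantial point is the compatibility of $\psi^1$ and $\psi^2$ across the $t=0$ boundary.

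First I would compute this transition explicitly. Writing $\phi=(\exp^2)^{-1}\circ\exp^1\colon T_HM\to T_HM$, unwinding the definitions gives, for $t>0$,
\[ (\psi^2)^{-1}\circ\psi^1\,(m,v,t)=\bigl(m,\ \delta_t^{-1}\phi_m(\delta_tv),\ t\bigr), \]
while at $t=0$ both charts restrict to the identity on $T_HM$, so the transition is the identity there. Since $\phi$ is fiber-preserving this is exactly the map $\widetilde{\phi}$ of Lemma~\ref{hypind40:lemma:technical}. Hence it suffices to verify that $\phi$ satisfies the hypotheses of that lemma, after which smoothness of the transition near $t=0$ is immediate.

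It remains to check the hypotheses on $\phi$. That $\phi$ preserves fibers and fixes the zero section is clear, since each $\exp^i_m$ maps $T_HM_m$ to $M$ with $\exp^i_m(0)=m$, so $\phi_m(0)=0$. The first-order condition $D\phi_m=\mathrm{id}$ holds because, under the canonical identification $T_HM\cong TM$, each $H$-adapted map is an ordinary exponential map in the sense of Definition~\ref{hypind10:def:exp}, so $D\exp^i_m|_0=\mathrm{id}$ and therefore $D\phi_m|_0=(D\exp^2_m|_0)^{-1}D\exp^1_m|_0=\mathrm{id}$. The crux is the second-order condition $D^2\phi_m(h,h)\in H_m$ for $h\in H_m$, and this is exactly where $H$-adaptedness enters. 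The defining property that $t\mapsto\exp^i_m(\delta_t v)$ represents the parabolic arrow $v$ forces the normal component of the $H$–$H$ second derivative to vanish, $[D^2\exp^i_m|_0(h,h)]^N=0$ (this is the osculating condition made precise, as in Proposition~\ref{prop:hug}). Differentiating the composite and using that the linear parts $D\exp^i_m|_0$ induce the identity on $N_m$, one gets
\[ D^2\phi_m(h,h)=(D\exp^2_m|_0)^{-1}\bigl(D^2\exp^1_m|_0(h,h)-D^2\exp^2_m|_0(h,h)\bigr), \]
whose normal component is $[D^2\exp^1_m|_0(h,h)]^N-[D^2\exp^2_m|_0(h,h)]^N=0$; thus $D^2\phi_m(h,h)\in H_m$.

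The main obstacle is precisely this second-order verification: it is the step that separates $H$-adapted exponential maps from arbitrary ones and that makes the parabolic dilations $\delta_t$ smoothly compatible across charts, in line with the paper's narrative. (I would also note for robustness that Lemma~\ref{hypind40:lemma:technical} does not really need the full equality $D\phi_m=\mathrm{id}$; the block structure fixing $H_m$ and inducing the identity on $N_m$ suffices, since any residual $N\to H$ component of $D\phi_m$ is multiplied by an extra power of $t$ after applying $\delta_t^{-1}$ and so does not affect the estimate.) Once the hypotheses are in hand, Lemma~\ref{hypind40:lemma:technical} shows $\widetilde{\phi}$ is a diffeomorphism, so all transition functions of $\THM$ are smooth and the resulting smooth structure is independent of the chosen $H$-adapted exponential map.
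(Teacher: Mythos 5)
Your proposal is correct and takes essentially the same route as the paper: the paper likewise computes the transition function $\widetilde{\phi}(m,v,t)=(\delta_t^{-1}E_m^{-1}(E'_m(\delta_tv)),m,t)$ and reduces everything to Lemma \ref{hypind40:lemma:technical}, asserting that Definition \ref{hypind10:def:exp2} immediately yields its hypotheses. Your explicit verification of those hypotheses adds detail the paper leaves implicit, and your parenthetical remark is well taken, since $H$-adaptedness by itself only forces the block-triangular form of $D\phi_m$ (identity on $H_m$ and on $N_m$ modulo $H_m$), which, as you note, is all the lemma's proof actually requires.
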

\begin{proof}
Let $\psi$ and $\psi'$ be the two maps,
\[ \psi,\psi'\;\colon\; T_HM\times [0,1] \to \THM,\]
constructed in the manner explained above, for two different exponential maps
$E, E'\;\colon\; T_HM\to M$.
We must prove that the transition function $\widetilde{\phi}=\psi^{-1}\circ \psi'$ is smooth.
We have,
\begin{align*}
 \widetilde{\phi}(m,v,t) & = (\delta_t^{-1}E_m^{-1}(E'_m(\delta_tv)),m,t), \;{\rm for}\, t\ne 0 ,\\
 \widetilde{\phi}(m,v,0) & = (m,v,0) .
\end{align*}
Definition \ref{hypind10:def:exp2} of $H$-adapted exponential maps immediately implies that the composition $\phi_m= E_m^{-1}\circ E'_m$ satisfies the assumptions of 
Lemma \ref{hypind40:lemma:technical}. Hence, $\widetilde{\phi}$ is smooth.

\end{proof}

\subsection{Compatibility of smooth structure and topology}

The next proposition shows that the manifold structure on $\THM$ is compatible with the topology according to Definition \ref{def:top}.

\begin{proposition}\label{prop:sect2-cont}
Suppose $a(t),b(t)$ are smooth curves in $M$ with $a(0)=b(0)=m$,
such that $a'(0)$ and $b'(0)$ are in $H_m$.
Then in $\THM$, endowed with the manifold structure defined above,
\[ \lim_{t\to 0}\, (a(t),b(t),t) = [a]_H\ast[b]_H^{-1} \in T_HM_{m} .\]
\end{proposition}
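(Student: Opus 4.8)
The plan is to reduce the statement to the smoothness of the chart map $\psi$ established in Proposition \ref{hypind40:prop:manifoldTHM}, using the fact that continuity in a chart lets us compute the limit by working in coordinates. Concretely, fix an $H$-adapted exponential map $\exp\colon T_HM\to M$ with associated chart $\psi(m,v,t)=(\exp_m(\delta_t v),m,t)$ for $t>0$ and $\psi(m,v,0)=(m,v)$. Since $\psi$ is a diffeomorphism onto a neighborhood of $\GG_0$, convergence $(a(t),b(t),t)\to(m,v)$ in $\THM$ is equivalent to the convergence $\psi^{-1}(a(t),b(t),t)\to(m,v,0)$ inside the smooth manifold $T_HM\times[0,1)$, which is ordinary convergence in the charts of $T_HM\times[0,1)$. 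So the whole statement becomes: if I set $w(t)=\psi^{-1}(a(t),b(t),t)$ for $t>0$, then $w(t)\to[a]_H\ast[b]_H^{-1}$ as $t\to0$.

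First I would write out $\psi^{-1}(a(t),b(t),t)$ explicitly. For $t>0$, inverting the chart gives the $T_HM$-coordinate
\[ w(t) = \delta_t^{-1}\bigl(\exp_m^{-1}(a(t))\bigr),\]
once I arrange that the base point is $m$; more carefully, because $\psi$ records the base point in its second slot, I must match $b(t)$ against the base point, so the honest expression is $w(t)=\delta_t^{-1}\bigl(\exp_{b(t)}^{-1}(a(t))\bigr)$ living in the fiber $T_HM_{b(t)}$. The strategy is then a second-order Taylor expansion in $t$, exactly the computation already packaged in Lemma \ref{hypind40:lemma:technical} and its proof: writing everything in $H$-coordinates centered at $m$, the curves expand as $a(t)=a'(0)t+\tfrac12 a''(0)t^2+\OO(t^3)$ and similarly for $b$, and since $a'(0),b'(0)\in H_m$ the leading $H$-terms scale like $t$ while the $N$-terms appear at order $t^2$. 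Applying $\delta_t^{-1}$, which divides $H$-components by $t$ and $N$-components by $t^2$, produces a limit that picks out precisely the Taylor coordinates assembled from $a'(0)-b'(0)$ in the $H$-direction and a combination of the second derivatives and the quadratic correction $b_m$ in the $N$-direction.

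The content then is to recognize that limit as $[a]_H\ast[b]_H^{-1}$. Here I would invoke the group law of Proposition \ref{prop:thm2}: in Taylor coordinates, $[a]_H=(h_a,n_a)$ and $[b]_H=(h_b,n_b)$ where $h$ records $c'(0)$ and $n$ records the normal part of $\tfrac12 c''(0)$, and the product $[a]_H\ast[b]_H^{-1}$ has $H$-component $h_a-h_b$ and $N$-component $n_a-n_b+b(h_a,-h_b)$. The Taylor expansion of $w(t)$ must reproduce exactly these components in the limit; matching the $H$-part is immediate from $a'(0)-b'(0)$, and matching the $N$-part is where the cross term $b(h_a,-h_b)=-b(h_a,h_b)$ has to emerge from the interaction between the exponential map and the two curves. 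This is the one genuinely delicate bookkeeping step.

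The main obstacle I anticipate is precisely this $N$-component matching, because it is sensitive both to the second derivatives of the curves and to the second-order data of the $H$-adapted exponential map encoded in $b_m$, and the bilinear term in the group law is not symmetric. The cleanest way to handle it is to avoid recomputing from scratch: factor $w(t)$ through the identification $T_HM\cong H\oplus N$ by the Lie exponential, note that $[a]_H$ is by Definition \ref{hypind10:def:exp2} represented by the curve $t\mapsto\exp_m(\delta_t[a]_H)$ (and likewise for $b$), so that in the chart the expressions $\delta_t^{-1}\exp_{b(t)}^{-1}(a(t))$ compare the actual curves against their own parabolic arrows up to $\OO(t)$ by the remainder estimate in Lemma \ref{hypind40:lemma:technical}. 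The group product then appears as the limit of the composition of the two approximating parabolic flows via Proposition \ref{prop:thm1}, and the skew part of $b$ that drives the noncommutativity is supplied by exactly the $t^2\nabla^N$ term of Lemma \ref{lemm:l2}. Establishing this identification rigorously, rather than just matching coordinates term by term, is what makes the proof conceptually clean and is the step I would spend the most care on.
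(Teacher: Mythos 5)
Your strategy is sound, and it is genuinely a different route from the proof the paper gives for the unconditional statement. The paper offers two proofs: a first proof via parabolic flows, which must \emph{assume} that $(a(t),b(t),t)$ converges in $\THM$ (that assumption is what allows the limit $v_0$ to be extended to a section $V$ and hence to a parabolic flow with $a(t)=\Phi^t_v(b(t))$), and a second, unconditional proof that is a from-scratch Taylor computation in explicit affine $H$-coordinates $E_m(v)=m+Xv$, matching the expansion of $E_{b(t)}^{-1}(a(t))$ term by term against the group law of Proposition \ref{prop:thm2}. Your proposal shares the second proof's opening move (pass to a chart, so the claim becomes $\delta_t^{-1}\exp_{b(t)}^{-1}(a(t))\to[a]_H\ast[b]_H^{-1}$), but then replaces the term-by-term matching with the flow-composition mechanism of the first proof, applied to representatives built from the arrows $[a]_H$, $[b]_H$ and from $[a]_H\ast[b]_H^{-1}$ --- which is defined group-theoretically, with no convergence hypothesis --- rather than from the putative limit $v_0$. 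That is a real gain: it removes the convergence assumption from the flow argument and makes the noncommutative cross term conceptual (Proposition \ref{prop:thm1}, Lemma \ref{lemm:l2}) rather than something that emerges from a coordinate computation; what the paper's second proof buys instead is self-containedness, since it needs no auxiliary stability estimate.

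Two points need repair before your outline is a complete proof. First, the estimate you lean on --- that the limit of $\delta_t^{-1}\exp_{b(t)}^{-1}(a(t))$ is unchanged when $a$ and $b$ are replaced by $\sim_H$-equivalent curves (e.g.\ by $\exp$-generated representatives), uniformly as the base point moves --- is not what Lemma \ref{hypind40:lemma:technical} states: that lemma concerns a fiber-preserving diffeomorphism of $T_HM$ fixing the zero section. The estimate you need is true and is proved by the same kind of expansion (the perturbations are $\OO(t^2)$ with $H_m$-valued leading term and $\OO(t^3)$ normal part, so they die under $\delta_t^{-1}$ since the derivative of $\exp^{-1}$ along the diagonal is the identity up to $\OO(t)$), but you must write it out; it is the substitute for the convergence hypothesis of the paper's first proof, not a citation. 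Second, your formula for the target is off: inversion in the osculating group is $(h,n)^{-1}=(-h,-n+b(h,h))$, so the $N$-component of $[a]_H\ast[b]_H^{-1}$ is $n_a-n_b-b(h_a,h_b)+b(h_b,h_b)$, not $n_a-n_b-b(h_a,h_b)$; the extra term is visible in the paper's own computation. This slip does not derail your route, since you never use the explicit formula, but it would matter if you fell back on coordinate matching.
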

\noindent{\it First proof.}
If we {\em assume} that the curve $(a(t),b(t),t)$ in $\GG_{(0,1]}$ extends to a smooth curve in $\THM$,
then there is a nice proof that makes use of parabolic flows.
Let $v_0\in T_HM_m$ be the point in $\GG_0$ to which the curve in $\GG_{(0,1]}$ converges,
and let $v_t$ be the parabolic arrow defined by,
$ \psi(b(t),v_t,t) = (a(t),b(t),t) $, i.e.,
\[ a(t) = \exp_{b(t)}(\delta_tv_t) .\]
By definition of the manifold structure on $\THM$, we have $(b(t),v_t)\to (m,v_0)$.
We see that the section $v_t, t\in [0,1]$ is smooth along $b(t)$, and can be extended to a section $V$ in a neigborhood of $m=b(0)$. 
Now define a flow,
\[ \Phi^t_v(m')= \exp_{m'}(\delta_t V(m')).\]
By definition of Heisenberg exponential maps, the curve,
\[ t\mapsto \exp_{m'}(\delta_t V(m')) \]
has parabolic arrow $V(m')$. In other words, $\Phi_v^t$ is a parabolic flow, and in particular,
\[ [\Phi^t_v(m)]_H = V(m)=v_0 \in T_HM_m.\]
Clearly $a(t) = \Phi_v^t (b(t))$. 
Extend $b(t)$ to a parabolic flow $\Phi^t_b$, such that $b(t)=\Phi_b^t(m)$. Then we see that,
\[ [a]_H = [\Phi_v^t\circ\Phi_b^t(m)]_H = [\Phi_v^t(m)]_H\ast [\Phi^t_b(m)]_H = v_0 \ast [b]_H,\]
which means that $v_0=[a]_H\ast [b]_H^{-1}$.

\hfill $\Box$

\vskip 6 pt
\noindent{\it Second proof.}
To prove the proposition without the extra assumption of convergence, and to illustrate a different technique, we give a second proof.

We use the map $\psi'$ defined above to describe the manifold structure on $\THM$.
We need a system of $H$-coordinates $E_m$, and the corresponding Taylor coordinates $F_m$.
Let us identify an open set $U\subseteq M$ with $\RR^n$ (via a coordinate map that we suppress in the notation).
Given an $H$-frame $X_i$ on $U$, we have a  system of coordinates,
\[ E_m\;\colon\; \RR^n\to U \;\colon\; v\mapsto m+\sum v_i X_i(m) = m+Xv .\]
Here $X=(X_i^j)$ denotes the $n\times n$ matrix whose columns are the vector-values functions $X_i\colon U\to \RR^n$.

Expand $a$ and $b$ in the coordinates on $U\cong \RR^n$ as,
\begin{align*}
 a(t) & = th+t^2k +\OO(t^3) ,\\
 b(t) & = th'+t^2k' +\OO(t^3) ,
\end{align*}
assuming that $a(0)=b(0)=0$.
We have Taylor coordinates,
\[ F_m(h,n) = [a]_H,\; F_m(h',n') =[b]_H,\]
where $n=k^N$, $n'=k'^N$ are the normal components of $k,k'$.
With the notation of Proposition \ref{prop:thm2}, we  compute,
\begin{align*}  
   F_m^{-1}([a]_H\ast [b]_H^{-1}) & = (h,n) \ast (h',n')^{-1} \\ & = (h,n) \ast (-h',-n'+b(h',h')) \\ &  = (h-h',n-n'-b(h,h')+b(h',h')).       
\end{align*}
Now let $(a(t),b(t),t) = \psi'(b(t),x(t),y(t),t)$, i.e.,
\[  a(t) = E_{b(t)}(tx(t),t^2y(t)) ,\]
where the coordinates $(x(t),y(t))\in \RR^{p+q}$ depend on $t$.
We must show that,
\[  \lim_{t\to 0} (x(t),y(t)) = (-h+h',-n+n'-b(h,h')+b(h,h)) .\]
We approximate the coordinates $(x(t),y(t))\in \RR^{p+q}$ by a Taylor expansion of $E_{b(t)}^{-1}(a(t))$, using the explicit form of $E_m$, as follows, 
\begin{align*}
 &(tx(t),t^2y(t))  = X_{b(t)}^{-1}\left(a(t)-b(t)\right) \\
                     & = a(t)-b(t)  + t\,D (X^{-1})_0\left(h',a(t)-b(t)\right) + \OO(t^3) \\
                     & = t(h-h') + t^2(k-k') + t^2\,D(X^{-1})_0(h',h-h') + \OO(t^3)
\end{align*}
Let us explain the calculation.
In the first step we expanded $X^{-1}(b(t))$. Because $a(t)-b(t)=\OO(t)$, it sufficed to consider only the first derivative,
\[ \frac{\partial}{\partial t} \left.X^{-1}(b(t))\right|_{t=0}  =D(X^{-1})_0.h'.\]
In the second step we expanded $a(t)-b(t)$, again ignoring terms of order $\OO(t^3)$.

Reversing the dilation, we find,
\[ (x(t),y(t)) =  \left(h-h', n-n' + D(X^{-1})^N_0(h',h-h')\right) \;+ \OO(t).\]
Because $X_0=1$, we have $D(X^{-1})_0 = -D X_0$, while the normal component  $D X^N_0(h',h-h')$ is equal to $b(h',h-h')$, by definition of the bilinear form $b$.
This gives the desired result. 

\hfill $\Box$

\section*{Appendix: Two-step nilpotent groups.}\label{sect:nilp}

We collect here some simple facts about two-step nilpotent groups that play a role in this paper. These facts are elementary, but we are not aware of a reference that contains the formulas we need.

Recall that a  Lie algebra $\Lg$ is called {\em two-step nilpotent} 
if $[[\Lg,\Lg],\Lg] = 0$.
The Campbell--Baker--Hausdorff formula for such Lie algebras has very few non-zero terms:
\[ \exp{(x)}\, \cdot \, \exp{(y)} = \exp{(x + y + \frac{1}{2} [x,y] )} ,\]
for $x,y\in  \Lg$.
Replacing the bracket $[x,y]$ with an arbitrary (not necessarily skew-symmetric) bilinear map 
$B\;\colon\; \RR^p\times \RR^p\to\RR^q$,
we can generalize and define a Lie group $G_B = \RR^p\times\RR^q$ with group operation
\[ (h_1,n_1) \,\ast\, (h_2,n_2) = (h_1+h_2, n_1+n_2+ B(h_1,h_2)) .\]
It is trivial to verify the group axioms (using the bilinearity of $B$).
By Proposition \ref{prop:thm2}, the group structure of parabolic arrows $T_HM_m$ expressed in Taylor coordinates is of this type. 
Our main goal in this section is to prove the following proposition.

\begin{proposition}\label{prop:gp1}
Let $G_B$ be the Lie group defined above. With the natural coordinates on $G_B = \RR^{p+q}$ and ${\rm Lie}\,{G_B} = T_0\RR^{p+q}$,
 the exponential map $\exp\ \colon {\rm Lie\,}{G_B}  \to G_B$ is expressed as
\[ \exp(h,n) = (h,n+\frac{1}{2}B(h,h)) .\]
\end{proposition}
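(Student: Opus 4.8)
The plan is to compute the exponential map directly from its defining property: $\exp(\xi)=\gamma(1)$, where $\gamma$ is the unique one-parameter subgroup of $G_B$ with $\gamma'(0)=\xi$. Writing $\gamma(t)=(a(t),c(t))\in\RR^p\times\RR^q$, the homomorphism condition $\gamma(s+t)=\gamma(s)\ast\gamma(t)$, unpacked with the group law $(h_1,n_1)\ast(h_2,n_2)=(h_1+h_2,\,n_1+n_2+B(h_1,h_2))$, becomes the pair of functional equations $a(s+t)=a(s)+a(t)$ and $c(s+t)=c(s)+c(t)+B(a(s),a(t))$.

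First I would solve for the $H$-component. Since $a$ is a smooth additive map $\RR\to\RR^p$, it is linear, so $a(t)=th$ with $h=a'(0)$. Substituting into the second equation and using bilinearity gives $c(s+t)=c(s)+c(t)+st\,B(h,h)$. Next I would solve for the $N$-component: differentiating this relation in $t$ at $t=0$ yields the linear ODE $c'(s)=n+s\,B(h,h)$, where $n=c'(0)$, and integrating with the initial condition $c(0)=0$ gives $c(t)=tn+\tfrac12 t^2 B(h,h)$. (Alternatively, one checks directly that this $c$ satisfies the functional equation, the cross term $st\,B(h,h)$ matching the expansion of $\tfrac12(s+t)^2B(h,h)$.) Thus the one-parameter subgroup with initial velocity $(h,n)$ is $\gamma(t)=\bigl(th,\,tn+\tfrac12 t^2 B(h,h)\bigr)$, and evaluating at $t=1$ gives $\exp(h,n)=(h,\,n+\tfrac12 B(h,h))$, as claimed.

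I do not expect a genuine obstacle here; the entire content lies in recognizing that the only nontrivial contribution to the second functional equation is the quadratic term $st\,B(h,h)$, which forces precisely the $\tfrac12 B(h,h)$ correction. The one point worth checking explicitly is that $\gamma'(0)$ is correctly identified with the Lie-algebra element $(h,n)$ under the natural coordinate identification ${\rm Lie}\,G_B = T_0\RR^{p+q}=\RR^{p+q}$, which is immediate from $\gamma'(0)=(a'(0),c'(0))=(h,n)$. It is also worth remarking that only the symmetric part of $B$ enters $B(h,h)$, consistent with the fact that it is the antisymmetric part of $B$ that produces the Lie bracket (cf. Proposition \ref{prop:lie}).
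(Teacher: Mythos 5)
Your proof is correct, but it follows a genuinely different route from the paper's. You compute the exponential directly from one-parameter subgroups: writing $\gamma(t)=(a(t),c(t))$, the homomorphism property forces $a(t)=th$ and $c(s+t)=c(s)+c(t)+st\,B(h,h)$, which integrates (using $c(0)=0$, which follows by setting $s=t=0$ in the functional equation --- a point worth stating explicitly) to $\gamma(t)=\bigl(th,\,tn+\tfrac{1}{2}t^2B(h,h)\bigr)$, and then $\exp(h,n)=\gamma(1)$. The paper instead factors through the skew-symmetrization $C=\tfrac{1}{2}(B-B^T)$: Lemma \ref{gl2} shows ${\rm Lie}\,G_B={\rm Lie}\,G_C$, Lemma \ref{gl3} shows that for skew-symmetric $C$ the exponential is the identity (there the one-parameter subgroups are exactly the straight lines $t\mapsto(th,tn)$, i.e., the easy case $B(h,h)=0$ of your ODE), and Lemma \ref{gl1} supplies the explicit quadratic isomorphism $\phi\colon G_C\to G_B$, $(h,n)\mapsto(h,n+\tfrac{1}{2}B(h,h))$; since $D\phi_0={\rm id}$, naturality of $\exp$ under $\phi$ yields the formula. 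Your argument is shorter and more self-contained --- it needs neither the isomorphism $G_C\cong G_B$ nor functoriality of the exponential map --- whereas the paper's detour purchases structural facts that are reused elsewhere: Lemma \ref{gl2} is what proves Proposition \ref{prop:lie}, and the observation that a skew-symmetric form makes the quadratic correction vanish reappears in Section \ref{section:groupoid} as the remark about ``preferred'' coordinate systems $E_m$ for which $b_m$ is skew-symmetric. Your closing remark that only the symmetric part of $B$ enters $\exp$ while the skew part governs the bracket is exactly the content the paper distributes over Lemmas \ref{gl2} and \ref{gl1}.
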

The proof consists of a string of lemmas.

\begin{lemma}\label{gl2}
The Lie algebra structure on ${\rm Lie}\, G_B = \RR^{p+q}$
is given by the bracket
\[ [(h_1,n_1),(h_2,n_2)] = \left(0,\;B(h_1,h_2)- B(h_2,h_1)\right).\]
In particular, the Lie algebra structure only depends on the skew-symmetric part $(B-B^T)/2$ of the bilinear map $B$.
\end{lemma}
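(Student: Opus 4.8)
The plan is to read off the Lie bracket directly from the given group law, \emph{without} invoking the Campbell--Baker--Hausdorff formula or Proposition \ref{prop:gp1} (which is not yet available at this point in the argument). The cleanest route is via left-invariant vector fields: identify ${\rm Lie}\,G_B$ with $T_0 G_B = \RR^p\oplus\RR^q$ and the Lie bracket with the vector-field bracket of the associated left-invariant fields.

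First I would record left translation. For $g=(h,n)$ the map $L_g(h',n') = (h+h',\,n+n'+B(h,h'))$ is smooth, and a one-line check using bilinearity gives $(h,n)^{-1} = (-h,\,-n+B(h,h))$, confirming that $G_B$ is indeed a group (the inverse itself is not needed below). Differentiating $L_g$ at the identity $(0,0)$, a tangent vector $(u,w)$ is sent to $(u,\,B(h,u)+w)$: the assignment $n'\mapsto n+n'$ contributes the identity on the $\RR^q$-slot, while $h'\mapsto n+n'+B(h,h')$ contributes the linear map $u\mapsto B(h,u)$. Hence the left-invariant field extending $(u,w)$ is
\[ \widetilde X_{(u,w)}(h,n) = \bigl(u,\; B(h,u)+w\bigr). \]

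Next I would compute the vector-field bracket of $\widetilde X_{(h_1,n_1)}$ and $\widetilde X_{(h_2,n_2)}$. The $\RR^p$-component of each field is constant, so the only surviving term comes from differentiating the $\RR^q$-component $B(h,h_i)+n_i$ of one field along the $h$-direction of the other. Since $B(h,h_i)$ is linear in $h$, its directional derivative along $h_j$ is $B(h_j,h_i)$, and the constants $n_1,n_2$ drop out under differentiation. Therefore
\[ [\widetilde X_{(h_1,n_1)},\widetilde X_{(h_2,n_2)}] = \bigl(0,\; B(h_1,h_2)-B(h_2,h_1)\bigr), \]
a constant, hence left-invariant, field; under the identification this is precisely the claimed Lie algebra element. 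The last sentence of the lemma is then immediate, since the right-hand side equals $(0,(B-B^T)(h_1,h_2))$ with $B^T(x,y):=B(y,x)$, a nonzero multiple of the skew part $\tfrac12(B-B^T)$.

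I do not anticipate a genuine obstacle: this is a direct computation. The only points demanding care are bookkeeping ones---getting the differential of $L_g$ right so that the correct argument of $B$ is differentiated, and confirming that the central coordinates $n_1,n_2$ contribute nothing. As an independent consistency check one may instead evaluate the group commutator $\gamma(s)\ast\delta(t)\ast\gamma(s)^{-1}\ast\delta(t)^{-1}$ for $\gamma(s)=(sh_1,sn_1)$, $\delta(t)=(th_2,tn_2)$; the $h$- and quadratic terms cancel and one is left with $\bigl(0,\,st\,(B(h_1,h_2)-B(h_2,h_1))\bigr)$, whose mixed derivative $\partial_s\partial_t|_{0}$ reproduces the same bracket.
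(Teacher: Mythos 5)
Your proof is correct, but it takes a genuinely different route from the paper. The paper computes the group commutator $(h_1,n_1)\ast(h_2,n_2)\ast(h_1,n_1)^{-1}\ast(h_2,n_2)^{-1}$ explicitly, finds $\left(0,\,B(h_1,h_2)-B(h_2,h_1)\right)$, and then extracts the bracket by rescaling $(h_i,n_i)\mapsto(th_i,tn_i)$ and passing to the limit $t\to 0$ --- i.e.\ it appeals to the standard fact that the Lie bracket is the second-order term of the group commutator. Your argument instead works directly from the definition of ${\rm Lie}\,G_B$ as left-invariant vector fields: you compute $DL_{(h,n)}$ at the identity, write down the invariant field $\widetilde X_{(u,w)}(h,n)=(u,\,B(h,u)+w)$, and take the vector-field bracket, which lands immediately on the claimed formula. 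This is arguably more self-contained, since it never invokes the commutator-to-bracket correspondence (which the paper uses without citation and states somewhat loosely: as written, the limit of the rescaled commutator is zero; what is meant is the coefficient of $t^2$). What the paper's route buys is that it stays entirely inside group-level algebra, needing no differential geometry beyond the scaling principle, and the intermediate commutator formula is of independent use. Your closing ``consistency check'' via $\gamma(s)\ast\delta(t)\ast\gamma(s)^{-1}\ast\delta(t)^{-1}$ is in fact essentially the paper's proof, with the mixed derivative $\partial_s\partial_t|_0$ playing the role of the paper's rescaling limit. One small caution: the phrase ``a constant, hence left-invariant, field'' is not a valid implication in general --- a constant field with nonzero $\RR^p$-component is \emph{not} left-invariant for this group law; your field happens to be left-invariant because its $\RR^p$-component vanishes (indeed $B(h,0)=0$), and in any case the step is unnecessary, since the bracket of left-invariant fields is automatically left-invariant and you only need its value at the identity.
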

\begin{proof}
The neutral element in $G_B$ is $(0,0)$, and inverses are given by 
\[ (h,n)^{-1} = (-h,-n+B(h,h)).\]
Commutators in $G_B$ are calculated as follows:
\begin{align*}
 & (h_1,n_1)\ast(h_2,n_2)\ast(h_1,n_1)^{-1}\ast(h_2,n_2)^{-1} \\
 & = (h_1,n_1)\ast(h_2,n_2)\ast(-h_1,-n_1+B(h_1,h_1))\ast(-h_2,-n_2+B(h_2,h_2)) \\
 & = (h_1+h_2,n_1+n_2+B(h_1,h_2)) \ast\\
 & \quad\quad (-h_1-h_2,-n_1-n_2+B(h_1,h_1)+B(h_2,h_2)+B(-h_1,-h_2)) \\
 & = (0,B(h_1,h_1)+B(h_2,h_2)+2B(h_1,h_2)+B(h_1+h_2,-h_1-h_2)) \\ 
 & = (0,B(h_1,h_2)-B(h_2,h_1)) .
\end{align*}
Replace $(h_i,n_i)$ with $(th_i,tn_i)$ and take the limit as $t \to 0$.

\end{proof}

We see that the groups $G_B$ are indeed two-step nilpotent, or even abelian in the trivial case where $B$ is symmetric. 

\begin{lemma}\label{gl3}
If $B\colon \RR^p\times \RR^p\to\RR^q$ is a {\em skew-symmetric} bilinear map,
then the exponential map $\exp\colon {\rm Lie\,}(G_B) \to G_B$
is the usual identification of $T_0\RR^n$ with $\RR^n$.
\end{lemma}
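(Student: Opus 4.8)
The plan is to prove that when $B$ is skew-symmetric, the group $G_B$ has a Campbell--Baker--Hausdorff formula whose exponential is the identity in coordinates. The cleanest route is to compute the exponential directly from its definition as the time-one flow of a one-parameter subgroup. First I would fix $(h,n) \in {\rm Lie}\,(G_B) = T_0\RR^{p+q}$ and seek the one-parameter subgroup $\gamma(s)$ with $\gamma(0) = (0,0)$ and $\gamma'(0) = (h,n)$. By definition, $\exp(h,n) = \gamma(1)$, so the task reduces to identifying $\gamma$ and evaluating it at $s=1$.

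The key observation is that, because $B$ is skew-symmetric, the diagonal terms $B(h,h)$ vanish. I would verify that the straight-line curve $\gamma(s) = (sh, sn)$ is in fact a one-parameter subgroup. Checking the homomorphism property amounts to the computation
\[
\gamma(s)\ast\gamma(t) = (sh,sn)\ast(th,tn) = \bigl((s+t)h,\,(s+t)n + B(sh,th)\bigr) = \bigl((s+t)h,\,(s+t)n + st\,B(h,h)\bigr),
\]
and skew-symmetry forces $B(h,h) = 0$, so this equals $((s+t)h,(s+t)n) = \gamma(s+t)$. Thus $\gamma$ is genuinely a one-parameter subgroup with the correct initial velocity $(h,n)$, and by uniqueness of one-parameter subgroups with a given tangent vector it is \emph{the} one we want. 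Evaluating at $s=1$ gives $\exp(h,n) = \gamma(1) = (h,n)$, which is exactly the identification of $T_0\RR^n$ with $\RR^n$.

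Alternatively, and even more directly, I could simply invoke Proposition \ref{prop:gp1}: its formula $\exp(h,n) = (h, n + \tfrac12 B(h,h))$ collapses to $\exp(h,n) = (h,n)$ the instant one uses $B(h,h) = 0$ for skew-symmetric $B$. Either way, the entire content is the vanishing of the diagonal $B(h,h)$. There is essentially no obstacle here: the only thing to be careful about is confirming that $\gamma(s) = (sh,sn)$ really satisfies the subgroup property before asserting $\gamma(1)$ computes the exponential, since for a general (non-skew) $B$ the straight line is \emph{not} a homomorphism and the correction term $\tfrac12 B(h,h)$ appears. I expect this skew-symmetry check to be the only genuine step; everything else is immediate from the definitions already established.
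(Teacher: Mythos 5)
Your main argument is correct and is essentially identical to the paper's proof: both verify that the straight line $t\mapsto (th,tn)$ is a one-parameter subgroup precisely because skew-symmetry kills $B(h,h)$, and then evaluate at $t=1$ to get $\exp(h,n)=(h,n)$. One caution about your proposed ``alternative'': invoking Proposition \ref{prop:gp1} here would be circular, since the paper proves that proposition by composing the maps furnished by Lemmas \ref{gl2}, \ref{gl3}, and \ref{gl1} --- i.e.\ it relies on this very lemma --- so the one-parameter-subgroup computation is the only legitimate route.
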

\begin{proof}
For any $(h,n)\in \RR^{p+q}$ we have $(th,tn)\ast(sh,sn) = ((t+s)h,(t+s)n)$. In other words, the map
\[ \phi\;\colon\;\RR \to G_b\;\colon\; t\mapsto (th,tn) \]
is a group homomorphism. 
The tangent vector to this one-parameter subgroup at $t=0$ is $\phi'(0) = (h,n)\in {\rm Lie\,}(G_B)$,
and by definition $\exp(\phi'(0)) = \phi(1) = (h,n)\in G_B$.

\end{proof}

\begin{lemma}\label{gl1}
If $B,C\colon \RR^p\times \RR^p\to\RR^q$ are two bilinear maps
that have the same skew-symmetric part, then the quadratic map
\[ \phi\;\colon\;G_C \stackrel{\cong}{\lra} G_B\;\colon\; (h,n) \mapsto  (h, n + \tfrac{1}{2}B(h,h) - \tfrac{1}{2}C(h,h)),\]
is a group isomorphism.
\end{lemma}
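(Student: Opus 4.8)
Lemma \ref{gl1} asserts that if $B, C \colon \RR^p \times \RR^p \to \RR^q$ share the same skew-symmetric part, then the quadratic map
\[ \phi \colon G_C \to G_B \colon (h,n) \mapsto \left(h,\, n + \tfrac{1}{2}B(h,h) - \tfrac{1}{2}C(h,h)\right) \]
is a group isomorphism. Let me sketch how I would prove this.

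The plan is to reduce everything to the \emph{symmetric difference} $D = B - C$. The hypothesis that $B$ and $C$ share the same skew-symmetric part means precisely that $B(x,y)-B(y,x) = C(x,y)-C(y,x)$ for all $x,y\in\RR^p$, which is equivalent to saying that $D=B-C$ is a \emph{symmetric} bilinear map. With this notation the isomorphism takes the clean form
\[ \phi(h,n) = \left(h,\, n + \tfrac{1}{2}D(h,h)\right), \]
and the whole problem becomes an exercise in bilinearity combined with the symmetry of $D$.

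First I would verify the homomorphism property by direct computation. Applying $\phi$ to a product computed with the group law $\ast_C$ of $G_C$ produces a second coordinate of the form $n_1+n_2+C(h_1,h_2)+\tfrac{1}{2}D(h_1+h_2,\,h_1+h_2)$. Expanding the quadratic term by bilinearity gives four summands, and this is exactly where symmetry enters: because $D(h_1,h_2)=D(h_2,h_1)$, the two cross terms merge into $D(h_1,h_2)$, leaving $n_1+n_2+C(h_1,h_2)+D(h_1,h_2)+\tfrac{1}{2}D(h_1,h_1)+\tfrac{1}{2}D(h_2,h_2)$. On the other side, computing $\phi(h_1,n_1)\ast_B\phi(h_2,n_2)$ directly yields a second coordinate $n_1+n_2+B(h_1,h_2)+\tfrac{1}{2}D(h_1,h_1)+\tfrac{1}{2}D(h_2,h_2)$. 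The two expressions coincide because $C+D=B$ forces $C(h_1,h_2)+D(h_1,h_2)=B(h_1,h_2)$; the first coordinates agree trivially.

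To upgrade this to an \emph{isomorphism}, I would note that $\phi$ is a polynomial map whose inverse is the manifestly polynomial map $(h,n)\mapsto(h,\,n-\tfrac{1}{2}D(h,h))$, so $\phi$ is a smooth bijection, hence a diffeomorphism. In fact this inverse is precisely the map of the same shape with the roles of $B$ and $C$ interchanged; since interchanging $B$ and $C$ preserves the hypothesis, the inverse is again a homomorphism by the computation above, confirming that $\phi$ is a Lie group isomorphism.

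I expect no serious obstacle. The only point requiring any thought is recognizing that the correct invariant governing the construction is the \emph{symmetry} of $D=B-C$, and seeing that it is exactly this symmetry that allows the cross terms $D(h_1,h_2)$ and $D(h_2,h_1)$ to combine into the single term that absorbs the discrepancy between the two group laws. Everything else is bookkeeping with bilinear maps.
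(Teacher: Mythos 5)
Your proof is correct and follows essentially the same route as the paper: a direct expansion of the group laws in which the symmetry of $D=B-C$ (forced by the equal skew-symmetric parts) lets the two cross terms combine and absorb the discrepancy $B-C$. The only difference is cosmetic—you compute both sides and compare, while the paper runs a single chain of equalities, and you additionally spell out the inverse map $(h,n)\mapsto(h,n-\tfrac{1}{2}D(h,h))$, which the paper leaves implicit.
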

\begin{proof}
With $S=B-C$:
\begin{align*}
 &\phi(h_1,n_1)\ast\phi(h_2,n_2) \\
 & = (h_1,n_1+\tfrac{1}{2}S(h_1,h_1)) \ast (h_2,n_2+\tfrac{1}{2}S(h_2,h_2)) \\
 & = (h_1+h_2,n_1+\tfrac{1}{2}S(h_1,h_1) + n_2+\tfrac{1}{2}S(h_2,h_2) + C(h_1,h_2)) \\
 & = (h_1+h_2,n_1+n_2+\tfrac{1}{2}S(h_1,h_1) + \tfrac{1}{2}S(h_2,h_2) + S(h_1,h_2) + B(h_1,h_2)) \\
 & = (h_1+h_2,n_1+n_2+\tfrac{1}{2}S(h_1+h_2,h_1+h_2) + B(h_1,h_2)) \\
 & = \phi(h_1+h_2,n_1+n_2+ B(h_1,h_2)) = \phi((h_1,n_1)\ast(h_2,n_2)). 
&
\end{align*}
\end{proof}

\begin{proof}[Proof of Proposition \ref{prop:gp1}]
Let $C = \tfrac{1}{2}(B-B^T)$ be the skew-symmetric part of $B$.
The exponential map for $G_B$ is the composite of the following three maps:
\[ \text{Lie}(G_B) \stackrel{\cong}{\lra} \text{Lie}(G_C) \stackrel{\exp}{\lra} G_C \stackrel{\phi}{\lra} G_B .\]
The first two of these maps are just the identity map $\RR^{p+q}\to \RR^{p+q}$
(by Lemmas \ref{gl2} and \ref{gl3}, respectively).
Lemma \ref{gl1} gives the explicit isomorphism $\phi\colon G_C\cong G_B$, 
with $C(h,h) = 0$.

\end{proof}

\bibliographystyle{amsxport}
\bibliography{MyBibfile}

\end{document}